\documentclass[11pt]{amsart}
\usepackage{geometry}                
\geometry{letterpaper}                   
\usepackage{graphicx}
\usepackage{amssymb}
\usepackage{epstopdf}
\usepackage{amssymb, amsmath, amscd, amsthm,
color, epsfig
}
\usepackage[all]{xy}          
\xyoption{dvips}              

\DeclareGraphicsRule{.tif}{png}{.png}{`convert #1 `dirname #1`/`basename #1 .tif`.png}

\title[Context-free manifold calculus and the Fulton-MacPherson Operad]{Context-free manifold calculus and the Fulton-MacPherson Operad}
\author{Victor Turchin}
\address{Department of Mathematics\\ Kansas State University \\ Manhattan, KS 66506 \\ USA}
\email{turchin@ksu.edu}
\subjclass[2010]{Primary: 57Q45; Secondary: 18D50, 55P48, 55P99}
\keywords{Embedding calculus, Fulton-MacPherson operad}
\thanks{Supported  by NSF grant DMS 0968046}
\date{}                                           

\newcommand{\calB}{\mathcal{B}}

\newcommand{\calO}{\mathcal{O}}

\newcommand{\R}{{\mathbb R}}

\newcommand{\Emb}{\operatorname{Emb}}
\newcommand{\Ebar}{\overline{\Emb}}
\newcommand{\Imm}{\operatorname{Imm}}
\newcommand{\Top}{{\mathbb T}\operatorname{op}}

\newcommand{\hIbimod}{\operatorname{hIbimod}}

\newcommand{\ind}{\operatorname{ind}}

\usepackage{graphicx, psfrag,rotating}

\DeclareMathOperator*{\holim}{holim}

\DeclareMathOperator*{\hNat}{hNat}
\DeclareMathOperator*{\hRmod}{hRmod}

\newcommand{\Ebarmn}{{\overline{\mathrm{Emb}}}_c(\R^m,\R^n)}

\newcommand{\Rmod}{\operatorname{Rmod}}

\newcommand{\Assoc}{{\mathcal A}ssoc}

\newcommand{\Man}{{\mathbb M}{\mathrm{an}}}
\newcommand{\SSS}{{\mathbb S}}
\newcommand{\Maps}{{\mathrm{Maps}}}
\newcommand{\J}{{\mathcal J}}
\newcommand{\Edm}{{\mathcal E}(D^m)}
\newcommand{\F}{{\mathcal F}}
\newcommand{\GL}{{\mathrm{GL}}}
\newcommand{\Tot}{{\mathrm{Tot}}}

\newcommand{\calN}{{\mathcal N}}
\newcommand{\calK}{{\mathcal K}}

\newcommand\rth{\refstepcounter{equation}}
\newcommand\numb{\rth{\rm \theequation}}
\numberwithin{equation}{section}


\theoremstyle{plain}
\newtheorem{theorem}{Theorem}[section]
\newtheorem{proposition}[theorem]{Proposition}
\newtheorem{lemma}[theorem]{Lemma}

\theoremstyle{definition}
\newtheorem{definition}[theorem]{Definition}

\theoremstyle{remark}

\begin{document}

\sloppy

\maketitle

\begin{abstract}
The paper gives an explicit description of the Weiss embedding tower in terms of spaces of maps of truncated  modules over the framed Fulton-MacPherson operad.
\end{abstract}


\setcounter{section}{0}

\section*{Organization of the paper}
In Section~\ref{s1} we outline a general framework of {\it context free manifold calculus} and its connection to the framed discs operad. The details of this approach were completed by P.~Boavida de
 Brito and M.~Weiss in~\cite{BoavidaWeiss}. One of the main results of their work is Theorem~\ref{t_cont_free} that describes the Weiss Taylor tower of a context free topological presheaf on a manifold in terms of
maps of truncated right modules over the  framed discs operad. Section~\ref{s1} is given to emphasize the fact that for this description one can use the operad of framed discs with both its usual and discrete
topology. A discrete version of Theorem~\ref{t_cont_free} appeared earlier in a work of G.~Arone and the author~\cite{ArTur}. Section~\ref{s2} is where the main construction  is given. In Theorem~\ref{t_fult_mac}  we
replace the framed discs operad by the framed Fulton-MacPherson operad and describe  Weiss' approximations $T_k\Emb(M,N)$ to the space $\Emb(M,N)$ of embeddings of one manifold into another
in terms of maps of truncated right modules over the latter operad. The right modules in question  themselves are naturally obtained from the Axelrod-Singer-Fulton-MacPherson compactifications of
framed configuration spaces in manifolds $M$ and $N$. This description of the embedding tower resembles both the Goodwillie-Klein-Weiss construction~\cite{GKW}  and also Sinha's models~\cite{Sinha,Sinha-OKS}
for spaces of one dimensional knots. The proof of Theorem~\ref{t_fult_mac} is very straighforward and does
not rely on the Boavida-Weiss Theorem~\ref{t_cont_free}. Sections~\ref{s3} and~\ref{s4} do this job. Moreover our construction can be used to give an alternative proof of Theorem~\ref{t_cont_free} which is shown in Section~\ref{s5}. Section~\ref{s6} produces another application of our construction describing Weiss' tower for spaces of long embeddings in terms of maps of truncated infinitesimal bimodules over the Fulton-MacPherson operad. As a corollary we obtain that for $n>m+2$ the space $\Ebarmn$ is equivalent to the space of derived maps between the operads of little discs $\calB_m$ and $\calB_n$ in the category of infinitesimal bimodules over $\calB_m$.

\section{Context free manifold calculus and the operad of framed discs}\label{s1}
In~\cite{Weiss} M.~Weiss introduced the so called {\it manifold calculus of functors}. Given a smooth manifold $M$, denote by $\calO(M)$ the category of open subsets of $M$. For any isotopy invariant cofunctor $F\colon\calO(M)\to\Top$ in topological spaces, Weiss defines a {\it Taylor tower}
$$
\xymatrix{
&F\ar[dl] \ar[d] \ar[dr] \ar[drr]\\
T_0F&T_1F\ar[l]&T_2F\ar[l]&T_3F\ar[l]&\ldots\ar[l]
}
\eqno(\numb)\label{eq_tower}
$$
of {\it polynomial approximations} of $F$. It became clear a while ago that the manifold calculus of functors is deeply related to the operad of little discs. Below we outline one of the constructions that shows this connection.

Let $\Man_m$ denote the {\it category of smooth $m$-manifolds}, where the morphisms are codimension zero embeddings. This category is naturally enriched in topological spaces. We denote by $^\delta\Man_m$ its discretization. For any $m$-manifold $M$ one has an obvious forgetful functor
$$
I_M\colon\calO(M)\to{} ^\delta\Man_m.
$$

\begin{definition}\label{d_cont_free} A cofunctor $\widetilde{F}\colon\calO(M)\to\Top$ is {\it context-free} if $\widetilde{F}$  up to a natural equivalence factors through $^\delta\Man_m$. In other words $\widetilde{F}\simeq F\circ I_M$ for some cofunctor $F\colon^\delta\Man_m\to\Top$.
\end{definition}

In the sequel by a context-free cofunctor we will often understand the underlying cofunctor $F\colon^\delta\Man_m\to\Top$. Notice that this definition is slightly different and somewhat simpler than the one previously used, see~\cite[Definition~4.9]{ArTur}. But the idea is still the same --- a cofunctor is context-free if \lq\lq it does not depend" on where the open subsets are located. As an example, consider a non-trivial fibration $p\colon E\to M$, then the cofunctor $\Gamma(-,p)$ of continuous sections of $p$ is linear, but in general not context-free. The context-free cofunctors abound. The embedding and immersion cofunctors $\Emb(-,N)$, $\Imm(-,N)$ are context-free. These cofunctors assign to an open set $U\subset M$ the space of smooth embeddings, respectingly immersions, of $U$ in another smooth manifold $N$ of dimension $\geq m$. As a further generalization for any type of multisingularity $\SSS$ the spaces $\Maps_\SSS(M,N)$ of smooth maps  $M\to N$ that avoid $\SSS$ also define a context-free cofunctor
$$
\Maps_\SSS(-,N)\colon\Man_m\to\Top.
$$
All these cofunctors are in fact {\it continuous} in the sense that they are defined on the enriched category $\Man_m$. Given a context-free cofunctor it is natural to forget about the initial manifold $M$ and study the calculus of cofunctors with domain $^\delta \Man_m$. We will call such calculus {\it context-free manifold calculus}. This variation of manifold calculus is actually more similar to their brothers homotopy calculus~\cite{Goodwillie1,Goodwillie2,Goodwillie3} and orthogonal calculus~\cite{WeissOrth} since it deals with all manifolds similarly as the homotopy calculus deals with all topological spaces or spectra, and the orthogonal calculus deals with all vector spaces of finite dimension.

By an obvious analogy with~\cite{Weiss} define a Grothendieck topology $\J_k$ on $^\delta\Man_m$ in which $\{U_i\stackrel{f_i}{\hookrightarrow} V\}_{i\in I}$ is a $\J_k$-cover if and only if $\bigcup_{i\in I}f_i(U_i)^{\times k}=V^{\times k}$. In other words any configuration of $\leq k$ points in $V$ should appear in the image of at least one $U_i$.  For our purposes we will be using the following definition of polynomial functors. It is actually a non-trivial result of Weiss that the following is equivalent to a more usual definition that uses cubical diagrams~\cite{Weiss}.

\begin{definition}\label{d_polynomial}
An isotopy invariant presheaf $F\colon ^\delta\Man_n\to\Top$ is called {\it polynomial of degree $\leq k$} if it is a homotopy $\J_k$-sheaf.
\end{definition}

A reader unfamiliar with the notion of a homotopy sheaf may wait until Section~\ref{s3} where we explain what this property means.

Let $^\delta\calO_{\leq k}$ (respectively $\calO_{\leq k}$) be the full subcategory of $^\delta\Man_m$ (respectively $\Man_m$) whose objects are disjoint unions of $\leq k$ standard $m$-balls. Thus this category has only $k+1$ objects. Define $T_kF$ as the homotopy right Kan extension of $F$ from $^\delta\calO_{\leq k}$ to $^\delta\Man_m$:
$$
T_kF(M)=\holim\limits_{^\delta\calO_{\leq k}\downarrow M} F.
\eqno(\numb)\label{eq_Tk}
$$

For every $M$ one has a natural map $F(M)\to T_kF(M)$. Denote by $\eta_F$ the corresponding natural transformation.

\begin{proposition}\label{p_sheafification}
For any isotopy invariant presheaf $F\colon^\delta\Man_m\to\Top$ the natural transformation $\eta_F\colon F\to T_kF$ is a homotopy $\J_k$ sheafification  of $F$ in the sense that
\begin{itemize}
\item $T_kF$ is polynomial of degree $\leq k$;
\item in case $F$ is polynomial of degree $\leq k$ then $\eta_F$ is a natural equivalence.
\end{itemize}
\end{proposition}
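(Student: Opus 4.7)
The plan is to prove both bullets by reducing the $\J_k$-sheaf condition for $T_kF$, as well as the equivalence $F\simeq T_kF$ under the hypothesis that $F$ is itself a $\J_k$-sheaf, to cofinality statements about the slice category $^\delta\calO_{\leq k}\downarrow M$. Recall that the homotopy $\J_k$-sheaf condition for a presheaf $G$ and a $\J_k$-cover $\{U_i\hookrightarrow V\}_{i\in I}$ asserts that the canonical map
\[
G(V)\longrightarrow \holim_{\emptyset\neq S\subset I\text{ finite}} G\Bigl(\bigcap_{i\in S}U_i\Bigr)
\]
is a weak equivalence, where the intersections are iterated pullbacks in $^\delta\Man_m$.

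For the first bullet, fix a $\J_k$-cover $\{U_i\hookrightarrow V\}_{i\in I}$. Unwinding the definition of $T_kF$ and interchanging homotopy limits, the Cech homotopy limit of $T_kF$ on this cover takes the form $\holim_S\holim_{B\hookrightarrow \bigcap_{i\in S}U_i} F(B)$, and I would compare it with the single $\holim_{B\hookrightarrow V}F(B)=T_kF(V)$ via the obvious forgetful functor from the double-indexed diagram to $^\delta\calO_{\leq k}\downarrow V$. The combinatorial heart of the argument is that the $\J_k$-cover condition $\bigcup_i f_i(U_i)^{\times k}=V^{\times k}$, together with isotopy invariance of $F$, supplies the required cofinality: every embedding $B\hookrightarrow V$ of a disjoint union of at most $k$ balls, after shrinking each component to a small neighborhood of a chosen center, is isotopic to an embedding factoring through some $U_i$. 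Isotopy invariance of $F$ then lets this isotopy cofinality do the job in the homotopy limit, giving the $\J_k$-sheaf property of $T_kF$.

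For the second bullet, assume $F$ is polynomial of degree $\leq k$ and $M\in{}^\delta\Man_m$. I would choose a countable collection $\{B_j\}_{j\in\mathbb{N}}$ of standard balls covering $M$ and form the $\J_k$-cover of $M$ consisting of all disjoint unions $B_{j_1}\sqcup\cdots\sqcup B_{j_r}\hookrightarrow M$ with $r\leq k$ (after a slight shrinking to make them genuinely disjoint). Each such union, and each iterated pullback of such unions, lies again in $^\delta\calO_{\leq k}$, so iterating the $\J_k$-sheaf condition on $F$ expresses $F(M)$ as a homotopy limit of $F$ evaluated on this combinatorial system. A cofinality comparison with $^\delta\calO_{\leq k}\downarrow M$---pushing an arbitrary $B\hookrightarrow M$ into one of the chosen sub-unions by a small isotopy, as above---identifies this limit with $T_kF(M)$, and traces through as a homotopy inverse to $\eta_F(M)$.

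The main obstacle in both bullets is the cofinality step. Since morphisms in $^\delta\Man_m$ are discrete, the proposed cofinal subcategories are not cofinal on the nose; they are cofinal only up to isotopy, and turning this into a genuine homotopy cofinality requires systematically exploiting isotopy invariance of $F$. The cleanest execution is probably to replace $F$ up to equivalence by a functor that is homotopically constant on isotopy components, equivalently to work in the $\infty$-categorical localization of $^\delta\Man_m$ at isotopy equivalences, after which the cofinality arguments become standard and the structure of the proof sketched above goes through without further surprise.
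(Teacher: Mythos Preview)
Your approach is different from the paper's and considerably harder than necessary, and the difficulty you flag at the end is precisely the one the paper avoids.

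The paper's proof is a one-step reduction to Weiss' original theorem. It introduces the evaluation functor
\[
ev_M\colon {}^\delta\calO_{\leq k}\downarrow M\longrightarrow \calO_{\leq k}(M),\qquad (f\colon U\hookrightarrow M)\mapsto f(U),
\]
where $\calO_{\leq k}(M)$ is the poset of open subsets of $M$ diffeomorphic to a disjoint union of $\leq k$ balls, and observes that $ev_M$ is \emph{genuinely} homotopy cofinal: for each $V\in\calO_{\leq k}(M)$ the comma category $V\downarrow ev_M$ has an initial object, namely any diffeomorphism from a standard union of balls onto $V$. Consequently $T_kF(M)=\holim_{{}^\delta\calO_{\leq k}\downarrow M}F$ is naturally equivalent to Weiss' classical $\holim_{\calO_{\leq k}(M)}F$. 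Since a presheaf on ${}^\delta\Man_m$ is a $\J_k$-sheaf iff its restriction to each $\calO(M)$ is, both bullets now follow by citing Weiss.

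The point you missed is that passing to the poset of \emph{images} $\calO_{\leq k}(M)$ collapses the ``up to isotopy'' issue completely: two embeddings $U\hookrightarrow M$ with the same image are already connected by a unique morphism in the slice category, so no localization or isotopy-invariance argument is needed for the cofinality. By contrast, your route attacks the $\J_k$-sheaf condition for $T_kF$ directly via a cofinality between the Grothendieck construction $\int_S({}^\delta\calO_{\leq k}\downarrow U_S)$ and ${}^\delta\calO_{\leq k}\downarrow V$; this is essentially re-proving Weiss' theorem inside the discrete category. It can probably be made to work after localizing at isotopies as you propose, but even then you must still check contractibility (not mere nonemptiness) of the relevant comma categories, which is the substantive content of Weiss' argument rather than a formality.
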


\begin{proof}
One obviously has that $F$ is polynomial of degree $\leq k$ if and only if its restriction on $\calO(M)$ is so for all $M\in{}^\delta\Man_m$. Let $\calO_{\leq k}(M)$ denote the subcategory of open subsets of $M$ diffeomorphic to a disjoint union of $\leq k$ balls. One has a natural evaluation functor
$$
ev_M\colon ^\delta \calO_{\leq k} \downarrow M\to \calO_{\leq k}(M)
$$
that assigns to any embedding $f\colon U\hookrightarrow M$, $U\in {}^\delta\calO_{\leq k}$, its image $f(U)$. This cofunctor is homotopy right cofinal\footnote{Actually it is both right and left cofinal, but we care only about the right cofinality since we only need that $ev_M$ preserves homotopy limits and our functors are contravariant.} and therefore the induced map
$$
\holim_{\calO_{\leq k}(M)} F\to \holim_{^\delta\calO_{\leq k}\downarrow M} F
$$
is a weak equivalence. Notice that the first homotopy limit is exactly Weiss' formula for the $k$-th approximation. Thus the properties of $T_kF$ mentioned in the proposition follow  from the analogous properties of Weiss' approximations~\cite{Weiss}.
The cofinality of $ev_M$ is immediate from the fact that for any $V\in\calO_{\leq k}(M)$ the corresponding undercategory $V\downarrow ev_M$ has initial objects.
\end{proof}

Yet there is another way to describe $T_kF$.

\begin{lemma}\label{l_nat}
For any presheaf $F\colon ^\delta\Man_m\to\Top$ one has a natural equivalence
$$
\holim_{^\delta\calO_{\leq k}\downarrow M} F \simeq \hNat_{^\delta\calO_{\leq k}}\left(^\delta\Emb(\bullet,M),F(\bullet)\right).
\eqno(\numb)\label{eq_equiv}
$$
\end{lemma}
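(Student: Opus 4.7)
The plan is to invoke a standard derived co-Yoneda identification: for any presheaf of sets $G\colon\calC\to\mathrm{Set}$ on a small category $\calC$ and any cofunctor $F\colon\calC\to\Top$, one has
$$
\hNat_{\calC}(G,F) \;\simeq\; \holim_{\operatorname{el}(G)} F\circ p,
$$
where $\operatorname{el}(G)$ denotes the category of elements of $G$ and $p\colon\operatorname{el}(G)\to\calC$ is the projection. Applied to $G={}^\delta\Emb(\bullet,M)$ this immediately gives the lemma, because by inspection $\operatorname{el}({}^\delta\Emb(\bullet,M))$ is precisely the overcategory $^\delta\calO_{\leq k}\downarrow M$: its objects are pairs $(U,f)$ with $U\in{}^\delta\calO_{\leq k}$ and $f\in{}^\delta\Emb(U,M)$, and its morphisms $(U,f)\to(U',f')$ are codimension-zero embeddings $g\colon U\to U'$ with $f'\circ g=f$.

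To establish the general identification, I would express $G$ as its canonical colimit of representables
$$
G \;=\; \colim_{(U,x)\in\operatorname{el}(G)} \operatorname{Hom}_{\calC}(\bullet,U),
$$
and observe that, because $G$ is set-valued, this is also a cofibrant replacement model for $G$ in the projective model structure on presheaves of spaces. Applying the contravariant functor $\hNat(-,F)$ converts the (homotopy) colimit into a homotopy limit indexed by $\operatorname{el}(G)$, while the enriched Yoneda lemma $\hNat(\operatorname{Hom}_{\calC}(\bullet,U),F)\simeq F(U)$ identifies each entry of the limit with $(F\circ p)(U,x)=F(U)$.

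The main obstacle is verifying that the equivalence holds at the derived level rather than merely up to a comparison map. A concrete route is to exhibit both sides as the totalization of the same Bousfield-Kan cosimplicial object whose $p$-simplices are
$$
\prod\,F(U_0),
$$
the product being over composable chains $U_0\leftarrow U_1\leftarrow\cdots\leftarrow U_p$ in $^\delta\calO_{\leq k}$ together with an embedding $U_p\hookrightarrow M$. On the left this is the standard cosimplicial replacement for $\holim_{{}^\delta\calO_{\leq k}\downarrow M}F$; on the right it arises by expanding the end that computes $\hNat$ and using discreteness of $^\delta\Emb(\bullet,M)$ to rewrite the products over $^\delta\Emb(U_p,M)$ as indexed products over embeddings $U_p\hookrightarrow M$. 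Once both sides are matched against this common model, the equivalence is tautological.
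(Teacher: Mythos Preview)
Your argument is correct. The identification of the category of elements of $^\delta\Emb(\bullet,M)$ with the overcategory $^\delta\calO_{\leq k}\downarrow M$ is exactly right, and the general principle $\hNat_{\calC}(G,F)\simeq\holim_{\operatorname{el}(G)}F\circ p$ for set-valued $G$ is standard. Your concrete route is also sound: with the two-sided cobar model for $\hNat$, the cosimplicial object in degree $p$ is $\prod_{c_0\to\cdots\to c_p}\mathrm{Map}(G(c_p),F(c_0))$, and since $G$ is discrete this unpacks to a product over chains $c_0\to\cdots\to c_p$ together with a point of $G(c_p)$, which is literally the Bousfield--Kan replacement for $\holim$ over $\operatorname{el}(G)$. (One small quibble: your arrows $U_0\leftarrow\cdots\leftarrow U_p$ together with $U_p\hookrightarrow M$ are oriented inconsistently; to have each $U_i$ inherit an embedding into $M$ you want $U_0\to\cdots\to U_p\hookrightarrow M$.)

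This differs from the route the paper sketches. The paper invokes \cite[Lemma~3.7]{Arone} and says that the cosimplicial model for $\hNat$ is the \emph{edgewise subdivision} of the cosimplicial model for the $\holim$, so that the equivalence is in fact a natural homeomorphism of totalizations. That argument presumably uses a different presentation of the homotopy end (via the twisted arrow category, whose nerve is the edgewise subdivision of $N\calC$), and then appeals to the invariance of $\Tot$ under edgewise subdivision. Your approach bypasses this by choosing the cobar model for $\hNat$, in which the two cosimplicial objects are identical rather than merely related by subdivision. The paper's route has the virtue of producing an explicit homeomorphism independent of the chosen model for $\hNat$; yours is more direct and avoids the extra combinatorics, at the cost of committing to a particular model of derived natural transformations.
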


In the above $^\delta\Emb(\bullet,M)$ is a cofunctor that assigns to any $U\in{}^\delta\calO_{\leq k}$ the space of embeddings $\Emb(U,M)$ with discrete topology; $\hNat$ denotes the space of homotopy natural transformations betsween cofunctors on $^\delta\calO_{\leq k}$. This lemma is a particular case of~\cite[Lemma~3.7]{Arone}. The idea of the proof is that both spaces can be described as a totalization of certain cosimplicial spaces, and moreover the second corresponding cosimplicial space is obtained by an edgewise subdivision of the first one. Thus the equivalence~\eqref{eq_equiv} can be viewed as a natural homeomorphism. The latter description of $T_kF$ has a nice interpretation from the point of view of the theory of operads. Notice that the category $\Man_m$, respectively $^\delta\Man_m$, is symmetric monoidal where the monoidal structure is given by disjoint union, and unit is the emptyset. Let $\Edm$, respectively $^\delta\Edm$, denote the operad of endomorphisms of the unit disc $D^m$ in $\Man_m$, respectively $^\delta\Man_m$. It is obvious that $\Edm$ is equivalent to the operad of framed discs, and $^\delta\Edm$ is simply the discretization of $\Edm$. Next notice that a cofunctor $G\colon\calO_{\leq k}\to\Top$, respectively $G\colon^\delta\calO_{\leq k}\to\Top$, is exactly the same thing as a $k$-truncated right module over $\Edm$, respectively $^\delta\Edm$. Indeed, given such functor define a sequence of $k+1$ spaces $G(i):=G(\coprod_i D^m)$, $i=0\ldots k$. This sequence has an obvious  $k$-truncated  right action of $\Edm$, respectively $^\delta\Edm$. This is a general fact since the operad in question is the operad of endomorphisms of $D^m$ and $G(\bullet)$ is a sequence of values of a cofunctor on the monoidal powers of $D^m$. Thus $T_kF(M)$ can be described as the space of derived maps of $k$-truncated right modules over $^\delta\Edm$:
$$
T_kF(M)\simeq\hRmod_{^\delta\Edm}{}_{\leq k}\left(^\delta\Emb(\bullet,M),F(\bullet)\right).
\footnote{As a general remark regarding notation, for a right module $G(\bullet)$ we will denote by $G(\bullet\bigr|_{\leq k})$ its $k$-truncation. However if we consider the space of (derived) maps of truncated right modules the notation $\bigr|_{\leq k}$ will be dropped since $\Rmod_{\leq k}$ already indicates that the objects are truncated.}
\eqno(\numb)\label{eq_rmod_descr}
$$
It is natural to ask whether $^\delta\Edm$ can be replaced by $\Edm$ in case $F$ is continuous. I asked this question to M.~Weiss and it turned out that his student P.~Boavida de Brito was already working on the same problem and a few months later they found an elegant solution thus proving the following

\begin{theorem}[\cite{BoavidaWeiss}]\label{t_cont_free}
For a cofunctor $F\colon\Man_m\to\Top$ the natural composition
$$
F(M)\longrightarrow\underset{\Edm}{\Rmod}{}_{\leq k}\left(\Emb(\bullet,M),F(\bullet)\right)
\longrightarrow\underset{\Edm}{\hRmod}{}_{\leq k}\left(\Emb(\bullet,M),F(\bullet)\right)
\eqno(\numb)\label{eq_comp_cont_free}
$$
is equivalent to the homotopy $\J_k$-sheafification $F(M)\to T_kF(M)$.
\end{theorem}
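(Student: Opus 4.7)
The plan is to deduce the theorem from Proposition~\ref{p_sheafification} by verifying a universal property. Write
$$G(M) := \hRmod^{\leq k}_{\Edm}\bigl(\Emb(\bullet,M),F(\bullet)\bigr),$$
so that the claim amounts to identifying the composition $F\to G$ with $\eta_F\colon F\to T_kF$. Since $T_kF$ is a homotopy $\J_k$-sheafification of $F$ by Proposition~\ref{p_sheafification}, it suffices to verify (1) that $G$ is polynomial of degree $\leq k$ as a cofunctor on $\Man_m$, and (2) that the natural transformation $F\to G$ is a weak equivalence on every object of $\calO_{\leq k}$. Indeed, applying $T_k$ to the map $F\to G$ then produces an equivalence $T_kF\to T_kG\simeq G$ (the second equivalence holding because $G$ is polynomial), which is by construction compatible with $\eta_F$.

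For step (2), the key observation is a derived Yoneda computation. When $M=\sqcup_j D^m$ with $j\leq k$, the $k$-truncated right $\Edm$-module $\Emb(\bullet,M)$ is the truncated representable right module at level $j$ arising from the operadic structure of $\Edm$. The derived Yoneda lemma for truncated right modules then identifies $G(M)\simeq F(\sqcup_j D^m)=F(M)$ compatibly with the natural map. The continuous topology on $\Edm$ is essential here: it is in the continuous setting that $\Emb(\bullet,\sqcup_j D^m)$ is a homotopically well-behaved representable module, so that $\hRmod$ really does reduce to evaluation.

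Step (1) is the main technical obstacle. For a $\J_k$-cover $\{U_i\hookrightarrow V\}_{i\in I}$, the goal is to show that the natural map $G(V)\to\holim G(U_\bullet)$ over the associated cubical diagram of iterated intersections is a weak equivalence. Since $\hRmod$ converts homotopy colimits in the source variable into homotopy limits, this reduces to a descent statement: the $k$-truncated right $\Edm$-module $\Emb(\bullet,V)$ should be the homotopy colimit of the modules $\Emb(\bullet,U_\bullet)$. This is precisely where the $\J_k$-condition enters essentially --- every configuration of at most $k$ points in $V$ lies in some $U_i$, and the continuous action of $\Edm$ assembles the corresponding local embedding spaces into the required global module. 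Establishing this descent carefully and then combining (1) and (2) via Proposition~\ref{p_sheafification} completes the proof.
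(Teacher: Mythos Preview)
Your overall architecture coincides with the paper's: both reduce to checking that $G$ is polynomial of degree $\le k$ and that $F\to G$ is an equivalence on objects of $\calO_{\leq k}$, the latter via a Yoneda argument, and then invoke the universal property of $T_k$. The divergence is entirely in how step~(1) is handled.

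The paper does \emph{not} prove polynomiality by establishing descent for $\Emb(\bullet,-)$ in the category of truncated right $\Edm$-modules. Instead it transfers the problem along the zigzag $\Edm\leftarrow W(\Edm)\rightarrow\F_m^{fr}$ and the accompanying zigzag $\Emb(\bullet,U)\leftarrow W(\Emb(\bullet,U))\rightarrow C^{fr}[\bullet,U]$ of right modules (natural in $U$), so that $G$ becomes equivalent to $U\mapsto\hRmod_{\F_m^{fr}}{}_{\leq k}\bigl(C^{fr}[\bullet,U],\ind(F)(\bullet)\bigr)$. Polynomiality of this latter functor is Proposition~\ref{p_hom_sheaf}, whose proof builds the explicit cofibrant replacement $\widetilde{C}^{fr}_{(k)}[\bullet,U]$ and uses a partition of unity on $Sub_{\leq k}(U)$ to write down, by hand, a homotopy inverse to the comparison map into the homotopy limit over the \v{C}ech diagram. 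So the paper's argument is constructive rather than categorical.

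Your proposal leaves the crucial step undone. The sentence ``the continuous action of $\Edm$ assembles the corresponding local embedding spaces into the required global module'' is a heuristic, not a proof; and you explicitly defer the work with ``establishing this descent carefully\ldots completes the proof.'' Levelwise your descent claim unwinds to $\hocolim_{\emptyset\neq S\subset I} C^{fr}(j,U_S)\simeq C^{fr}(j,V)$ for $j\le k$, which is open-cover descent for framed configuration spaces and can be made rigorous, but that is precisely the technical content of the theorem. The paper's partition-of-unity construction in Section~\ref{s3} is, in effect, one concrete implementation of exactly this descent, carried out after moving to a model where a cofibrant source is available. If you want your route to stand on its own, you must either supply that open-cover argument directly (and check it is compatible with the right-module structure), or else point to the paper's Proposition~\ref{p_hom_sheaf} as the substitute --- at which point the two proofs coincide.
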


They used a slightly different language to formulate this result, but their~\cite[Section 6]{BoavidaWeiss} shows that it can be reformulated using the operadic approach.
A discrete version of this result appeared in~\cite{ArTur} with the only difference that in that paper we considered only submanifolds of $\R^m$ and the acting operad was the operad of little (non-framed) discs.
The above result is quite remarkable not only because it gives a connection between the theory of operads and manifold calculus, but also it shows that the discretization of the operad of (framed) discs still keeps a lot of information about the initial topological operad. It would be interesting to understand exactly what information is preserved by discretization and to which extend it is true for any or a larger range of topological operads. As a natural analogy the homology  of any Lie group with coefficients in a cyclic group is conjectured to coincide with the homology of its discretization. This conjecture known as the Friedlander-Milnor conjecture was a subject of an extensive research~\cite{FrMi,Milnor,Morel,Sah,Suslin}.

\section{Embedding tower and Fulton-MacPherson operad}\label{s2}
The  motivating example for the Weiss manifold calculus is the study of embedding spaces. The Taylor tower~\eqref{eq_tower} for the embedding cofunctor $\Emb(-,N)$ is usually called {\it embedding tower}. It turns out that one can obtain a nice decription of the embedding tower by replacing the operad $\Edm$ by a much smaller but equivalent operad $\F_m^{fr}$ the so called {\it framed Fulton-MacPherson operad}.  Recall the Fulton-MacPherson operad $\F_m$~\cite{GetzJon,Salvatore}. This operad was simultaneously introduced by  several people, in particular  by Getzler and Jones~\cite{GetzJon}. Its components are manifolds with corners such that the interior of $\F_m(k)$ is $C(k,\R^m)/G$ the configuration space of $m$ distinct labeled points in~$\R^m$ quotiented out by translations and positive rescalings. Notice that $\F_m(0)=\F_m(1)=\{*\}$. By {\it reduced Fulton-MacPherson operad} $\bar{\F}_m$ we will understand the suboperad of $\F_m$ obtained by making the degree zero component to be empty $\bar{\F}_m(0)=\emptyset$ and keeping all the other components the same $\bar{\F}_m(k)=\F_m(k)$, $k\geq 1$.  It is noticed in~\cite{GetzJon} that the operad $\bar{\F}_m$ is cofibrant. As an operad in sets it is freely generated by the interiors of its components. The {\it framed Fulton-MacPherson operad} $\F_m^{fr}$ has components $\F_m(k)\times (\GL_m)^{\times k}$, $k\geq 0$, where $\GL_m$ is the group of general linear transformations of $\R^m$. Each component of this operad is a certain compactification of the space of framed configurations modulo translations and rescaling. The composition in $\F_m^{fr}$ uses the fact that $\F_m$ is an operad in spaces with $\GL_m$-action, see~\cite{Salvatore}. We will also consider the {\it oriented framed Fulton-MacPherson operad} $\F_m^{or}$ which is a suboperad of $\F_m^{fr}$ and whose components are $\F_m(k)\times (\GL_m^+)^{\times k}$, $k\geq 0$, where $\GL_m^+$ is the group of orientation preserving linear transformations of $\R^m$. 

For any manifold $M$ let $C(k,M)$, $k\geq 0$, denote the configuration space
$$
C(k,M)=\left\{ (x_1,\ldots,x_k)\in M^{\times k}\, |\, x_i\neq x_j \text{ for all } i\neq j\right\}.
$$
And let $C[k,M]$ denote its Axelrod-Singer-Fulton-MacPherson compactification~\cite{AxelSing, SinhaCompact}. A thorough treatment of this construction is given by Sinha in~\cite{SinhaCompact} from where we borrowed our notation. $C[k,M]$ is a manifold with corners whose interior is $C(k,N)$. The
 boundary strata consist of configurations where some of the points collided. One has an obvious projection $C[k,M]\to M^{\times k}$ and we denote by $p_i\colon C[k,M]\to M$ its $i$-th component, $1\leq i\leq n$. In case a manifold $N$ has dimension $\geq m$ we define a space $C^{m\_fr}[k,N]$ which fibers over $C[k,N]$ with a fiber over any point $X\in C[k,N]$ being the space of tuples $(\alpha_1,
\ldots,\alpha_k)$, where each $\alpha_i\colon\R^m\hookrightarrow T_{p_i(X)}M$ is a linear injective map called {\it partial framing}. In case of a manifold of dimension $m$ we will simply write $C^{fr}[k,M]$ instead of $C^{m\_fr}[k,M]$. In case $M$ is oriented we also consider spaces $C^{or}[k,M]\subset C^{fr}[k,M]$ for which all the framings $\alpha_i\colon\R^m\stackrel{\simeq}{\longrightarrow}T_{p_i(X)}M$ are orientation preserving.
Obviously, the sequences $C^{fr}[\bullet,M]$, $C^{m\_fr}[\bullet,N]$ are right modules over $\F_m^{fr}$. Similarly, for an oriented $m$-manifold $M$, the sequence $C^{or}[\bullet,M]$ is naturally a right module over $\F_m^{or}$;
and for a parallelized $M$, $C[\bullet,M]$ is a right module over $\F_m$. Notice that any embedding
$f\colon M\hookrightarrow N$ induces a natural {\it evaluation map} $ev_f\colon C^{fr}[k,M]\to C^{m\_fr}[k,N]$ which is a morphism of right $\F_m^{fr}$-modules.

\begin{theorem}\label{t_fult_mac}
In the above notation the composition
$$
\Emb(M,N)\stackrel{ev}{\longrightarrow}\underset{\F_m^{fr}}{\Rmod}{}_{\leq k}\left(C^{fr}[\bullet,M],C^{m\_fr}[\bullet,N]\right)\longrightarrow \underset{\F_m^{fr}}{\hRmod}{}_{\leq k}\left(C^{fr}[\bullet,M],C^{m\_fr}[\bullet,N]\right)
\eqno(\numb)\label{eq_fult_mac}
$$
is equivalent to the $\J_k$-sheafification $\Emb(M,N)\to T_k\Emb(M,N)$. In particular the limit of the embedding tower $T_\infty\Emb(M,N)$ is equivalent to the space of derived maps of right $\F^{fr}_m$-modules
$$
T_\infty\Emb(M,N)\simeq \underset{\F_m^{fr}}{\hRmod}\left(C^{fr}[\bullet,M],C^{m\_fr}[\bullet,N]\right).
$$
In case $M$ is oriented, respectively parallelized, the same is true for the compositions
$$
\Emb(M,N)\longrightarrow \underset{\F_m^{or}}{\hRmod}{}_{\leq k}\left(C^{or}[\bullet,M],C^{m\_fr}[\bullet,N]\right),
\eqno(\numb)\label{eq_fult_mac_or}
$$
$$
\Emb(M,N)\longrightarrow \underset{\F_m}{\hRmod}{}_{\leq k}\left(C[\bullet,M],C^{m\_fr}[\bullet,N]\right).
\eqno(\numb)\label{eq_fult_mac_parall}
$$
\end{theorem}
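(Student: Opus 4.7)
The plan is to identify both sides with a common homotopy limit built from the stratified structure of the Fulton--MacPherson compactifications. By Weiss' original definition of $T_k$ together with the cofinality argument in the proof of Proposition~\ref{p_sheafification}, one may replace the target with
\[
T_k\Emb(M,N) \simeq \holim_{U \in \calO_{\leq k}(M)} \Emb(U,N),
\]
so the task is to match this homotopy limit with the derived right-module mapping space on the right-hand side of the theorem.

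The central technical input is that $C^{fr}[\bullet,M]$ is cofibrant, in fact cellular, as a truncated right $\F_m^{fr}$-module. This is inherited from the cofibrancy of the reduced Fulton--MacPherson operad $\bar{\F}_m$ recalled in Section~\ref{s2}: the corner stratification of each $C^{fr}[k,M]$ is indexed by rooted trees, with the top (interior) stratum being the genuine framed configuration space $C^{fr}(k,M)$ and the deeper strata obtained by applying operadic compositions in $\F_m^{fr}$ to lower-arity framed configurations. Equivalently, $C^{fr}[\bullet,M]$ is freely generated as a right $\F_m^{fr}$-module in sets by the interiors $C^{fr}(k,M)$. Consequently the derived mapping space into $C^{m\_fr}[\bullet,N]$ reduces to a strict end indexed by this tree category.

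It remains to compare this end with Weiss' $\holim$. The evaluation functor sending an open set $U = \coprod_{i=1}^{j} B_i \in \calO_{\leq k}(M)$ to the configuration of a chosen basepoint and derivative in each ball $B_i$ produces a functor from $\calO_{\leq k}(M)$ into (a thickening of) the indexing category for the end, and this functor is homotopy right cofinal because the space of embeddings $D^m \hookrightarrow M$ deformation retracts onto its frame bundle over $M$. Under this identification the cofunctor $\Emb(-,N)$ pulls back to the cofunctor represented by $C^{m\_fr}[\bullet,N]$, via the analogous evaluation equivalence $\Emb(\coprod_j D^m, N) \simeq C^{m\_fr}(j,N)$. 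Matching the natural maps from $\Emb(M,N)$ on both sides identifies the composition in the theorem with the $\J_k$-sheafification $\Emb(M,N) \to T_k\Emb(M,N)$. The oriented and parallelized cases follow line by line from the framed case by replacing $\GL_m$ everywhere with $\GL_m^+$ or with the trivial group.

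The main obstacle will be rigorously verifying the cofibrancy/cellularity of $C^{fr}[\bullet,M]$ as a right $\F_m^{fr}$-module and accounting for its tree stratification in a manner compatible with truncation at arity $k$. A subtler secondary point is the right-cofinality of the evaluation functor from $\calO_{\leq k}(M)$ to the category of tree-indexed configurations, which must interpolate correctly between nested open balls on one side and operadic blow-ups on the other. Once these technical ingredients are in place, identifying the natural map $\Emb(M,N) \to T_k\Emb(M,N)$ with the composition displayed in the theorem reduces to a naturality check against the universal example where $M$ is itself a disjoint union of at most $k$ discs.
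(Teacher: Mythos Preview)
Your central claim --- that $C^{fr}[\bullet,M]$ is cofibrant (cellular, free on interiors) as a truncated right $\F_m^{fr}$-module --- is false, and this is precisely the technical crux of the argument. The tree-stratification picture you invoke does exhibit $C^{fr}[\bullet,M]$ as freely generated by the interiors $C^{fr}(k,M)$, but only over the \emph{reduced} operad $\bar{\F}_m^{fr}$ with $\bar{\F}_m^{fr}(0)=\emptyset$; this is the content of Lemma~\ref{l_reduced_cofibr}. Over the full operad one must also account for the arity-zero component $\F_m^{fr}(0)=\{*\}$, which acts by forgetting a point of the configuration. That operation sends the open stratum $C^{fr}(k,M)$ surjectively onto $C^{fr}(k-1,M)$, so your proposed generators are not free: the degeneracies impose relations among them that any genuine cellular model must resolve. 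The paper's fix is the ``hairy'' replacement $\widetilde{C}^{fr}[\bullet,M]$ (Proposition~\ref{p_cofibr}), in which a forgotten point leaves behind a hair of length~$1$; only after this modification does one obtain a cofibrant right $\F_m^{fr}$-module.

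Without cofibrancy your end computation and the subsequent cofinality comparison do not get off the ground: once degeneracies are present there is no clean tree-indexed end that models the derived mapping space. The paper takes a different global route anyway. Rather than matching two homotopy limits directly, it invokes the universal property of $T_k$: one shows that the functor
$U\mapsto \underset{\F_m^{fr}}{\Rmod}{}_{\leq k}\bigl(\widetilde{C}^{fr}_{(k)}[\bullet,U],\,C^{m\_fr}[\bullet,N]\bigr)$
is a homotopy $\J_k$-sheaf (Proposition~\ref{p_hom_sheaf}, via an explicit partition-of-unity argument on $Sub_{\leq k}(U)$) and that the evaluation map is an equivalence whenever $U$ is a disjoint union of at most $k$ balls (Proposition~\ref{p_eval}). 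These two statements force the functor to be $T_k\Emb(-,N)$. The oriented and parallelized variants are handled not by a literal symbol replacement of $\GL_m$ but by Lemma~\ref{l_homeo}, which identifies the framed mapping space with the oriented or unframed one when $M$ carries the relevant extra structure.
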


The first statement of this theorem is equivalent to the Boavida-Weiss Theorem~\ref{t_cont_free} applied to the embedding cofunctor. Our proof of~\ref{t_fult_mac} can be considered as an alternative proof of~\ref{t_cont_free} in this case. To see this equivalence we recall~\cite[Proposition~3.9]{Salvatore} that $\F_m^{fr}$ is equivalent to $\Edm$ via a zigzag
$$
\Edm\longleftarrow W(\Edm)\longrightarrow \F_m^{fr},
$$
where $W(\Edm)$ is the Boardmann-Vogt replacement of $\Edm$. Thus by~\cite[Theorem~16.B]{Fresse} the right-hand sides of~\eqref{eq_comp_cont_free} and~\eqref{eq_fult_mac} can be expressed as spaces of derived maps of (truncated) right modules over $W(\Edm)$. Finally,  one has similar zigzags of equivalences of right $W(\Edm)$-modules
$$
\Emb(\bullet,N)\longleftarrow W(\Emb(\bullet,N))\longrightarrow C^{m\_fr}[\bullet,N],
\eqno(\numb)\label{eq_rmod_zigN}
$$
$$
\Emb(\bullet,M)\longleftarrow W(\Emb(\bullet,M))\longrightarrow C^{fr}[\bullet,M],
\eqno(\numb)\label{eq_rmod_zigM}
$$
where $W(-)$ is a similar Boardmann-Vogt resolution of the corresponding right module over $\Edm$. 

In fact our construction can be used to give an alternative proof of Theorem~\ref{t_cont_free}, see Section~\ref{s5}.

To prove Theorem~\ref{t_fult_mac} we will construct a cofibrant replacement $\widetilde{C}^{fr}[\bullet,M]$ (functorial on $M$) of $C^{fr}[\bullet,M]$ in the category of right modules over $\F_m^{fr}$. The following result is important to understand that construction.

\begin{lemma}\label{l_reduced_cofibr}
For any smooth $m$-manifold $M$, $C^{fr}[\bullet,M]$ is cofibrant in the category of right modules over the reduced framed Fulton-MacPherson operad $\bar{\F}_m^{fr}$. Similarly its any $k$-truncation $C^{fr}[\bullet\bigr|_{\leq k},M]$ is cofibrant in the category of $k$-truncated right modules.
\end{lemma}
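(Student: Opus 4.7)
The plan is to exhibit $C^{fr}[\bullet,M]$ as an iterated pushout of generating cofibrations of right $\bar{\F}_m^{fr}$-modules, using the manifold-with-corners stratification of the Axelrod-Singer-Fulton-MacPherson compactification. The key geometric input is that for each $k\ge 2$, the interior of $C^{fr}[k,M]$ is the open framed configuration space $C^{fr}(k,M)$, and every open boundary stratum is canonically the image of an operadic composition
$$
C^{fr}(k',M)\times \prod_{s=1}^{k'}\F_m^{fr}(k_s)^{\circ} \,\longrightarrow\, C^{fr}[k,M]
$$
for some $k'<k$ with $\sum_s k_s=k$ and all $k_s\ge 1$, where $(-)^{\circ}$ denotes the interior. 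The factorization of a boundary configuration into a macroscopic part in $C^{fr}(k',M)$ and microscopic insertions in the $\F_m^{fr}(k_s)^{\circ}$ is unique up to the symmetric group action. This uniqueness is the framed analogue of the observation from~\cite{GetzJon}, already used in the excerpt, that $\bar{\F}_m$ is free as an operad in sets on the interiors of its components.

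Next I would define an increasing filtration $R_1\subset R_2\subset \cdots \subset C^{fr}[\bullet,M]$ in which $R_j$ is the sub-right-$\bar{\F}_m^{fr}$-module generated by $C^{fr}[i,M]$ for all $i\le j$. Because $\bar{\F}_m^{fr}(0)=\emptyset$, the right action cannot decrease arity, so $R_j(i)=C^{fr}[i,M]$ for $i\le j$, while for $i>j$ the space $R_j(i)$ is the closed union of the strata of $C^{fr}[i,M]$ corresponding to configurations with at most $j$ macroscopic clusters; in particular $\bigcup_j R_j=C^{fr}[\bullet,M]$. The central step is to show that each inclusion $R_{j-1}\hookrightarrow R_j$ fits in a pushout square of right $\bar{\F}_m^{fr}$-modules
$$
\xymatrix{
F_j\bigl(\partial C^{fr}[j,M]\bigr)\ar[r]\ar[d] & F_j\bigl(C^{fr}[j,M]\bigr)\ar[d]\\
R_{j-1}\ar[r]& R_j,
}
$$
where $F_j(X)$ denotes the free right $\bar{\F}_m^{fr}$-module on a $\Sigma_j$-space $X$ placed in arity $j$, and the left vertical arrow is the morphism of right modules extending the inclusion $\partial C^{fr}[j,M]\hookrightarrow R_{j-1}(j)$ via operadic factorization. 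Since $\partial C^{fr}[j,M]\hookrightarrow C^{fr}[j,M]$ is a cofibration of topological spaces (boundary inclusion of a manifold with corners, trivial on the framing factors), the right vertical arrow is a generating cofibration of right modules, so the pushout $R_{j-1}\hookrightarrow R_j$ is a cofibration.

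The main obstacle will be verifying the pushout property itself, a check done arity by arity. In arity $j$ the pushout yields $R_{j-1}(j)\sqcup_{\partial C^{fr}[j,M]} C^{fr}[j,M]=C^{fr}[j,M]=R_j(j)$, since by the stratification $R_{j-1}(j)$ meets the open cell $C^{fr}(j,M)$ precisely along $\partial C^{fr}[j,M]$. In arity $i>j$, both sides identify with the union of $j$-cluster strata of $C^{fr}[i,M]$ glued to $R_{j-1}(i)$; here the uniqueness of operadic decompositions, coming from the freeness of $\bar{\F}_m^{fr}$ as an operad in sets, ensures that the image of $F_j\bigl(C^{fr}(j,M)\bigr)$ injects as precisely the open $j$-cluster locus and that the closure relations in the AS-FM compactification match those in the free module. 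Granted this, $C^{fr}[\bullet,M]$ is the colimit of a transfinite sequence of cofibrations starting from the initial right module, hence cofibrant. For the truncated statement, the same argument applied to the finite filtration $R_1\subset \cdots \subset R_k=C^{fr}[\bullet\bigr|_{\le k},M]$ inside the category of $k$-truncated right modules yields the conclusion.
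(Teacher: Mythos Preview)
Your overall strategy---filter by the number of macroscopic clusters and exhibit each stage as a pushout of a free right module along a boundary inclusion---is exactly the paper's. But the pushout square you write down is incorrect, and the error is in the claimed uniqueness of operadic factorization.

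The point is that $\bar{\F}_m^{fr}(1)=\GL_m$, not a point. Hence the free right $\bar{\F}_m^{fr}$-module on the $\Sigma_j$-space $C^{fr}[j,M]$ already satisfies
\[
F_j\bigl(C^{fr}[j,M]\bigr)(j)\;\cong\;C^{fr}[j,M]\times(\GL_m)^{\times j},
\]
and the right vertical map to $R_j(j)=C^{fr}[j,M]$ is the framing-twist action, which is surjective with fibre $(\GL_m)^{\times j}$. More generally, a boundary point of $C^{fr}[n,M]$ with $j$ clusters does \emph{not} factor uniquely (up to $\Sigma_j$) as $X\circ(c_1,\ldots,c_j)$ with $X\in C^{fr}(j,M)$: the framing $\alpha_i$ at a cluster point $p_i(X)$ can be replaced by $\alpha_i\circ h$ for any $h\in\GL_m$, provided one simultaneously replaces $c_i$ by $h^{-1}\cdot c_i$. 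So your square is a commutative square but not a pushout; the pushout of the top row is strictly larger than $R_j$.

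The paper avoids this by taking the \emph{unframed} spaces $C[\ell,M]$ as the generating cells. When $M$ is parallelized one has a canonical section $C[\ell,M]\hookrightarrow C^{fr}[\ell,M]$, the free module $Free_{\bar{\F}_m^{fr}}(C[\ell,M];\ell)$ in arity $\ell$ is $C[\ell,M]\times(\GL_m)^{\times\ell}\cong C^{fr}[\ell,M]$, and the pushout square holds. For general $M$ the paper refines the filtration further using a cell decomposition of $M$, so that over each cell the tangent bundle is trivial and the same argument applies. Your proposal needs this correction: replace the framed generating cells by unframed ones together with a (local) parallelization.
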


\begin{proof}
Intuitively one can see that $C^{fr}[\bullet,M]$ is cofibrant since  as a right $\bar{\F}_m^{fr}$ module in sets it is freely generated by the symmetric  sequence $C(\bullet,M)$ --- the interiors of $C[\bullet,M]$. In the sequel each $C(k,M)$, $k\geq 0$,  will be called a {\it generating stratum} of this right module. Below we give a more rigorous argument.

Let $Sub_{\leq k}(M)$ denote the space of subsets of $M$ of cardinality $\leq k$, topologized as a quotient of $\{\emptyset\}\amalg M^{\times k}/\Sigma_k$. We will also consider the space of all finite subsets of $M$
$$
Sub(M)=\bigcup_{k\geq 0}Sub_{\leq k}(M).
$$
Denote by $g$ the composition
$$
g\colon\coprod_{k=0}^\infty C^{fr}[k,M]\longrightarrow \coprod_{k=0}^\infty M^{\times k} \longrightarrow Sub(M).
$$
For $X\in C^{fr}[k,M]$ we say that $g(X)$ is the {\it set of geometrically distinct points of $X$}. Define a filtration in $C^{fr}[\bullet,M]$ by the number of geometrically distinct points $C^{fr}_{(\ell)}[\bullet,M]=g^{-1}\left(Sub_{\leq \ell}(M)\right)$ :
$$
C^{fr}_{(0)}[\bullet,M]\subset C^{fr}_{(1)}[\bullet,M]\subset C^{fr}_{(2)}[\bullet,M]\subset \ldots
\eqno(\numb)\label{eq_filtration1}
$$
Notice that the right action of $\bar{\F}_m^{fr}$ does not change the image of $g$:
$$
g(X\circ_i c)=g(X)
\eqno(\numb)\label{eq_g_invariance}
$$
for all $X\in C^{fr}[\bullet,M]$ and $c\in \bar{\F}_m^{fr}(\bullet)$. Therefore filtration~\eqref{eq_filtration1} is a filtration of right modules over $\bar{\F}_m^{fr}$. One can show that each inclusion in this filtration is a cofibration, which guarantees that all elements of the filtration and its colimit are cofibrant. In case $M$ is parallelized one has the following pushout square of right $\bar{\F}_m^{fr}$-modules:
$$
\xymatrix{
Free_{\bar{\F}_m^{fr}}\left(\partial\,C[\ell,M];\ell\right)\ar[r]\ar[d]&Free_{\bar{\F}_m^{fr}}\left(C[\ell,M];\ell\right)\ar[d]\\
C^{fr}_{(\ell-1)}[\bullet,M]\ar[r]^(.75){\Bigr\lrcorner}&C^{fr}_{(\ell)}[\bullet,M].
}
\eqno(\numb)\label{eq_pushout1}
$$
In the above $Free_{\bar{\F}_m^{fr}}(A;\ell)$  denotes the free $\bar{\F}_m^{fr}$ right module generated by a space $A$ with a free $\Sigma_{\ell}$ action; $\partial\, C[\ell,M]$ denote the boundary of $C[\ell,M]$. The vertical maps above are defined by a choice of trivialization of $TM$. Since the upper arrow is a cofibration so is the lower one.

In the case $M$ is not parallelized one can consider a cellular decomposition of $M$ and then refine the above argument using the trivialization of the tangent bundle over each cell.
\end{proof}

  The above lemma shows that $C^{fr}[\bullet,M]$  fails to be cofibrant as a right  $\F_m^{fr}$ module only because of  the degree zero component $\F_m^{fr}(0)=*$ which acts by forgetting the corresponding point in  configuration.
A slight adjustment has to be done in order to make it cofibrant.

Define $\widetilde{C}^{fr}[k,M]$ as a {\it space of hairy configurations}. Its points are tuples $(X;y_1,\ldots,y_\ell;t_1,\ldots t_\ell)$, $\ell\geq 0$, where $X\in C^{fr}[k,M]$; $y_i\in M$, $i=1\ldots \ell$; $0\leq t_1\leq t_2\leq\ldots\leq t_\ell\leq 1$. The data $(\bar y; \bar t)=(y_1,\ldots,y_\ell;t_1,\ldots,t_\ell)$ can be viewed  as a bunch of hairs that grow from the points $y_1,\ldots,y_\ell$ and have length $t_1,\ldots,t_\ell$ respectively. Thus
$$
\widetilde{C}^{fr}[k,M]=\left.\left(\coprod_{\ell=0}^\infty C^{fr}[k,M]\times (M^{\times\ell}\times [0,1]^\ell)/\Sigma_\ell\right)\right/\sim,
\eqno(\numb)\label{eq_w_eq}
$$
where the equivalence relation is as follows: If one of the hairs (say $y_1$) gets contracted to zero, the corresponding point $y_1$ disappears. If two hairy points $y_i$ and $y_j$ collide, only the hair of the longer length survives. If one of the hairs collides with a point or a conglumeration of points of $X$, the hair also disappears. Explicitly,
$$
(X;y_1,\ldots,y_\ell;t_1,\ldots,t_\ell)\sim (X;y_{\sigma_1},\ldots,y_{\sigma_\ell};t_{\sigma_1},\ldots,t_{\sigma_\ell}),
\eqno(\numb)\label{eq_hairs_sigma}
$$
whenever $t_{\sigma_1}\leq\ldots\leq t_{\sigma_\ell}$, $\sigma\in\Sigma_\ell$;
$$
(X;y_1,y_2,\ldots,y_\ell;0,t_2,\ldots,t_\ell)\sim (X;y_2,\ldots,y_\ell;t_2,\ldots,t_\ell);
\eqno(\numb)\label{eq_hairs_0}
$$
$$
(X;y_1,\ldots,y_\ell;t_1,\ldots,t_\ell)\sim (X;y_1,\ldots,\hat y_i,\ldots,y_\ell;t_1,\ldots,\hat t_i,\ldots,t_\ell)
\eqno(\numb)\label{eq_hairs_collide}
$$
whenever $y_i=y_j$, $i<j$, or $y_i=p_m(X)$, $1\leq m\leq k$.

The right action of any element $c\in\F_m^{fr}(k)$ in degree $k\geq 1$, is defined to affect only $X$:
$$
(x;\bar y;\bar t)\circ_i c= (x\circ_i c;\bar y;\bar t).
\eqno(\numb)\label{eq_rigt_act1}
$$
For $\{e\}=\F_m^{fr}(0)$ the right action is defined by
$$
(X;\bar y; \bar t)\circ_i e=
(X\circ_i e;\bar y, p_i(X); \bar t, 1).
\eqno(\numb)\label{eq_right_act2}
$$
In other words this action replaces the $i$-th point in $X$ by a hair of length 1.
Notice however that in case $p_i(X)=p_j(X)$ for some $j\neq i$, one has $(X\circ_i e;\bar y, p_i(X); \bar t, 1)=(X\circ_i e;\bar y; \bar t)$ by~\eqref{eq_hairs_collide}.

\begin{proposition}\label{p_cofibr}
The natural projection
$$
\widetilde{C}^{fr}[\bullet,M]\to C^{fr}[\bullet,M],
\eqno(\numb)\label{eq_forget_hairs}
$$
that forgets all hairs, defines a cofibrant replacement of $C^{fr}[\bullet,M]$ as a right $\F_m^{fr}$ module. Moreover for every $k\geq 0$ the $k$-th truncation
$\widetilde{C}^{fr}_{(k)}[\bullet\bigr|_{\leq k},M]$ of the $k$-th filtration term~\eqref{eq_filtration2} is a cofibrant replacement   of  $C^{fr}[\bullet\bigr|_{\leq k},M]$ as a $k$-truncated right $\F_m^{fr}$ module.
\end{proposition}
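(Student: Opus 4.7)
The proof splits into two parts: verifying that \eqref{eq_forget_hairs} is a levelwise weak equivalence, and establishing cofibrancy of $\widetilde{C}^{fr}[\bullet,M]$ over $\F_m^{fr}$. For the former I would use the strong deformation retraction $H_s(X;\bar y;\bar t)=(X;\bar y;s\bar t)$, $s\in[1,0]$, onto the subspace of configurations with no hairs. At $s=0$ every hair has length zero and iterating \eqref{eq_hairs_0} removes them all; the homotopy descends to the quotient \eqref{eq_w_eq} because uniform scaling of $\bar t$ commutes with \eqref{eq_hairs_sigma} and \eqref{eq_hairs_collide}. The deformation is not equivariant under the 0-ary action \eqref{eq_right_act2}, but this is unnecessary for a levelwise equivalence.

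For cofibrancy, I would refine the filtration of Lemma~\ref{l_reduced_cofibr} to account for hair-bases. Define $\widetilde{g}(X;\bar y;\bar t):=g(X)\cup\{y_1,\ldots,y_\ell\}\in Sub(M)$ and
$$
\widetilde{C}^{fr}_{(\ell)}[\bullet,M]:=\widetilde{g}^{-1}\bigl(Sub_{\leq\ell}(M)\bigr).
$$
These are right $\F_m^{fr}$-submodules: the positive-arity action preserves $g(X)\subset M$ by \eqref{eq_g_invariance}, while \eqref{eq_right_act2} swaps $p_i(X)\in g(X)$ for a new hair-base $y_{\ell+1}=p_i(X)$, leaving $\widetilde{g}$ invariant. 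The bottom term $\widetilde{C}^{fr}_{(0)}[\bullet,M]$ is a point in arity zero and empty elsewhere, hence the free right $\F_m^{fr}$-module on the arity-zero generator.

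The key step is to exhibit each inclusion $\widetilde{C}^{fr}_{(\ell-1)}\hookrightarrow\widetilde{C}^{fr}_{(\ell)}$ as a pushout of a free cell along a boundary, in the spirit of~\eqref{eq_pushout1}. Over $C[\ell,M]$ of $\ell$ geometrically distinct positions in $M$, each position carries either (i) a nontrivial $X$-cluster, i.e.\ a point of $\bar{\F}_m^{fr}(k_i)$ for some $k_i\geq 1$, or (ii) a hair of length $t_i\in(0,1]$; the two are disjoint by \eqref{eq_hairs_collide}. The closure of this parameter space gives $A_\ell$ fibered over $C[\ell,M]$, with boundary $\partial A_\ell$ coming from (a) the collision locus $\partial C[\ell,M]$, where two positions merge and their decorations collapse via the operad structure, and (b) the hair-length-zero locus, where a hair disappears and its position is deleted from the configuration. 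Assuming $M$ parallelized, this yields a pushout square of right $\F_m^{fr}$-modules:
$$
\xymatrix{
Free_{\F_m^{fr}}\bigl(\partial A_\ell;\ell\bigr) \ar[r]\ar[d] & Free_{\F_m^{fr}}\bigl(A_\ell;\ell\bigr) \ar[d] \\
\widetilde{C}^{fr}_{(\ell-1)}[\bullet,M] \ar[r]^(.75){\Bigr\lrcorner} & \widetilde{C}^{fr}_{(\ell)}[\bullet,M].
}
$$
The non-parallelized case is handled by a cellular refinement of $M$ using local trivializations of $TM$, exactly as in Lemma~\ref{l_reduced_cofibr}.

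The main technical obstacle is the precise identification at the hair-length-zero boundary: one must verify that ``hair of length zero at position $y$'' in the source corresponds exactly to ``$y$ deleted from the configuration,'' which under the free $\F_m^{fr}$-action is the image of the lower-level configuration under $\circ_i e$. This is the Boardman--Vogt-style degeneration for the operadic inclusion $\bar{\F}_m^{fr}\hookrightarrow\F_m^{fr}$ restricted to its single 0-ary generator. For the truncated statement, the same cell attachments with $\ell\leq k$ restrict to the $k$-truncated category and present $\widetilde{C}^{fr}_{(k)}[\bullet\bigr|_{\leq k},M]$ as a cofibrant $k$-truncated $\F_m^{fr}$-module; the hair-contraction stays inside the $k$-th filtration term (since it only shrinks the set $\widetilde{g}$) and provides the weak equivalence to $C^{fr}[\bullet\bigr|_{\leq k},M]$.
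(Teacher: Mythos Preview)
Your overall strategy matches the paper's: the hair-contraction $H_s$ gives the levelwise equivalence, and cofibrancy is proved by exhibiting the filtration $\widetilde{C}^{fr}_{(\ell)}[\bullet,M]$ by the number of geometrically distinct points as a sequence of cofibrations. The paper also reduces to the parallelized case and invokes the cellular refinement for general $M$.

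However, your description of the attaching cell $A_\ell$ is incoherent, and this is a genuine gap. You say that over $C[\ell,M]$ each position carries either a point of $\bar{\F}_m^{fr}(k_i)$ or a hair, and then you take $Free_{\F_m^{fr}}(A_\ell;\ell)$. But if $A_\ell$ already carries the operadic cluster data, applying the free right module functor decorates each input a second time. Moreover, the notation ``$;\ell$'' places $A_\ell$ entirely in arity $\ell$, whereas the new content at filtration step $\ell$ has arity equal to the number of \emph{active} (non-hair) positions, which ranges over $0,1,\ldots,\ell$. A single pushout in a single arity cannot capture this. Finally, your boundary analysis lists only collisions and hair-length-zero; it omits the hair-length-one locus, which is precisely where a hair is identified with the $\F_m^{fr}(0)$-action on an active position via~\eqref{eq_right_act2}. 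That identification is the entire mechanism making the $0$-ary action free, so it cannot be left implicit.

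The paper fixes all three issues by refining the single filtration by a \emph{second} index: the number of hairs whose length lies strictly in $(0,1)$. Writing the arity as $\ell$ and the number of such hairs as $i$, the generating stratum attached at the step $\widetilde{C}^{fr}_{(\ell+i-1),i-1}\hookrightarrow\widetilde{C}^{fr}_{(\ell+i-1),i}$ is the interior $C(\ell+i,M)\times(0,1)^i/\Sigma_i$, sitting in arity $\ell$, with closure $C[\ell+i,M]\times[0,1]^i/\Sigma_i$. Its boundary has three pieces: collisions in $\partial C[\ell+i,M]$; some $t_j=0$ (one fewer geometrically distinct point); and some $t_j=1$ (one fewer hair of intermediate length, landing in the free module on the arity-$(\ell+1)$ stratum acted on by $e$). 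The second filtration is exactly what guarantees that all three boundary pieces are already present before the cell is attached. Your ``Boardman--Vogt-style degeneration'' remark gestures at the $t_j=1$ phenomenon, but your pushout square does not implement it.
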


By an obvious analogy we define $\widetilde{C}^{or}[\bullet,M]$, respectively $\widetilde{C}[\bullet,M]$, as a cofibrant replacement of $C^{or}[\bullet,M]$, respectively $C[\bullet,M]$, in the category of right modules over $\F_m^{or}$, respectively $\F_m$.

\begin{proof}[Sketch of the proof]
First we notice that the projection~\eqref{eq_forget_hairs} is an equivalence of $\F_m^{fr}$-modules. This means that for every component $\bullet=\ell$ this map is a $\Sigma_\ell$-equivariant homotopy equivalence. The homotopy inverse  is the natural inclusion $C^{fr}[\ell,M]\hookrightarrow \widetilde{C}^{fr}[\ell,M]$.

Secondary, it is easy to see that $\widetilde{C}^{fr}[\bullet,M]$ as a right $\F_m$-module in sets is freely generated by the symmetric sequence
$$
\widetilde{C}(\ell,M)=\coprod_{i=0}^{+\infty} \left. C(\ell+i,M)\times (0,1)^i \right/ \Sigma_i, \quad \ell\geq 0,
\eqno(\numb)\label{eq_gen_strat}
$$
where $\Sigma_i$ acts by a simultaneous permutation of the last $i$ points in $C(\ell +i,M)$ and the coordinates of $(0,1)^i$. We sketch a proof below that shows in which order the above generating strata are attached. For simplicity one can assume that $M$ is parallelized. If it is not the argument must be further refined as in the proof of Lemma~\ref{l_reduced_cofibr}.  Define a map
$$
\widetilde{g}\colon\coprod_{\ell=0}^{+\infty} \widetilde{C}^{fr}[\ell,M]\to Sub(M)
\eqno(\numb)\label{eq_tilde_g}
$$
by sending $\widetilde{X}=(X;y_1\ldots y_i;t_1\ldots t_i)$, with all $t_j\neq 0$, to $g(X)\bigcup\{y_1\ldots y_i\}$. Notice that $\widetilde{g}$ is not continuous contrary to the map $g$. Again $\widetilde{g}(\widetilde{X})$ is caled the {\it  set of geometrically distinct points} of $\widetilde{X}$. Notice that this map is invariant with respect to the $\F_m^{fr}$ action:
$$
\widetilde{g}(\widetilde{X}\circ_i c)=\widetilde{g}(\widetilde{X})
$$
for every $X\in \widetilde{C}^{fr}[\bullet,M]$ and $c\in \F_m^{fr}$. In particular this means we can define a filtration of right $\F_m^{fr}$-modules:
$$
\widetilde{C}_{(0)}^{fr}[\bullet,M]\subset \widetilde{C}_{(1)}^{fr}[\bullet,M]\subset \widetilde{C}_{(2)}^{fr}[\bullet,M]\subset\ldots,
\eqno(\numb)\label{eq_filtration2}
$$
where $\widetilde{C}_{(k)}^{fr}[\bullet,M]$ is the preimage $\widetilde{g}^{-1}\left(Sub_{\leq k}(M)\right)$. Each inclusion in the above filtration is a cofibration  of right $\F_m^{fr}$ modules. To see
 this we notice that besides the filtration by the number of geometrically distinct points there is another natural filtration in $\widetilde{C}^{fr}[\bullet,M]$ by the number of hairs whose length is strictly between
0 and 1. Let $\widetilde{C}^{fr}_{(k),i}[\bullet,M]$, $k\geq 0$, $i\geq -1$, denote the right submodule of $\widetilde{C}_{(k)}^{fr}[\bullet,M]$ that consists of hairy configurations with either $\leq k$
geometrically distinct points or with exactly $k+1$ geometrically distinct points, but with $\leq i$ hairs of length strictly between 0 and 1. One can easily see that the generating stratum $\left. C(\ell+i,M)\times
(0,1)^i\right/\Sigma_\ell$ from~\eqref{eq_gen_strat} is attached exactly when we we pass from $\widetilde{C}^{fr}_{(\ell+i-1),i-1}[\bullet,M]$ to $\widetilde{C}^{fr}_{(\ell+i-1),i}[\bullet,M]$. By this we
mean that one has a pushout square similar to~\eqref{eq_pushout1}:
$$
\xymatrix{
Free_{{\F}_m^{fr}}\left(\partial\,\left(C[\ell+i,M]\times [0,1]^i/\Sigma_i\right);\ell\right)\ar[r]\ar[d]&Free_{{\F}_m^{fr}}\left(C[\ell+i,M]\times [0,1]^i/\Sigma_i;\ell\right)\ar[d]\\
\widetilde{C}^{fr}_{(\ell+i-1),i-1}[\bullet,M]\ar[r]^(.75){\Bigr\lrcorner}&\widetilde{C}^{fr}_{(\ell+i-1),i}[\bullet,M].
}
\eqno(\numb)\label{eq_pushout2}
$$

The truncated case follows from the fact that the generating strata of $\widetilde{C}_{(k)}^{fr}[\bullet,M]$ all lie in the components of degree $\leq k$.
\end{proof}

Theorem~\ref{t_fult_mac} is a consequence of Propositions~\ref{p_cofibr}, \ref{p_hom_sheaf}, \ref{p_eval} and Lemma~\ref{l_homeo}.

\begin{proposition}\label{p_hom_sheaf}For any right $\F_m$ module $\calN(\bullet)$,
the cofunctor that assigns to any $U\in \calO(M)$ (respectively $U\in {}^\delta\Man_m$) the space
$$
\underset{\F_m^{fr}}{\Rmod}{}_{\leq k}\left(\widetilde{C}^{fr}_{(k)}[\bullet,U],\calN(\bullet)\right)
\eqno(\numb)\label{eq_rmod_maps}
$$
is a homotopy $\J_k$-sheaf on $\calO(M)$ (respectively ${}^\delta\Man_m$).
\end{proposition}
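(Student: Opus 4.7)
The plan is to convert the homotopy sheaf condition into a statement about homotopy colimits of the cofibrant source modules $\widetilde{C}^{fr}_{(k)}[\bullet, U]$, and then reduce, via the cellular filtration of Proposition~\ref{p_cofibr}, to a classical \v{C}ech descent statement for (compactified) configuration spaces.

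Fix a $\J_k$-cover $\mathcal{U} = \{U_\alpha \hookrightarrow V\}_{\alpha \in A}$ of $V$, and for each nonempty finite $S \subset A$ set $U_S = \bigcap_{\alpha \in S} U_\alpha$. I need to show that the canonical map
$$
F(V) \longrightarrow \holim_{S} F(U_S), \qquad F(U) := \underset{\F_m^{fr}}{\Rmod}{}_{\leq k}\bigl(\widetilde{C}^{fr}_{(k)}[\bullet, U], \calN(\bullet)\bigr),
$$
is a weak equivalence. By Proposition~\ref{p_cofibr} each $\widetilde{C}^{fr}_{(k)}[\bullet, U]$ is cofibrant, so $F(U)$ is already a derived mapping space. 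Since mapping out of a homotopy colimit of cofibrant objects produces a homotopy limit of mapping spaces, it suffices to establish the equivalence of $k$-truncated right $\F_m^{fr}$-modules
$$
\hocolim_{S}\, \widetilde{C}^{fr}_{(k)}[\bullet, U_S] \xrightarrow{\ \simeq\ } \widetilde{C}^{fr}_{(k)}[\bullet, V]. \qquad (*)
$$

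To prove $(*)$ I would argue cell by cell. Weak equivalences of truncated right modules are detected componentwise in $\Top$, and the free right $\F_m^{fr}$-module functor, being a left adjoint, preserves colimits. Using the filtration~(\ref{eq_filtration2}) and the pushout squares~(\ref{eq_pushout2})—both functorial in $U$—an induction on filtration stages reduces $(*)$ to showing, for every $0 \leq n \leq k$, the equivalence
$$
\hocolim_{S}\, C[n, U_S] \xrightarrow{\ \simeq\ } C[n, V], \qquad (**)
$$
together with its analogue for the boundary $\partial C[n, U_S]$. The factor $[0,1]^i/\Sigma_i$ appearing in each cell is constant in $U$ and pulls out of the homotopy colimit, and the framings in $C^{fr}[n,-]$ are pulled back along the projection to $U$, so neither affects the $U$-dependence at the homotopy level.

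The equivalence $(**)$ is then classical \v{C}ech descent. The hypothesis $V^{\times k} = \bigcup_\alpha U_\alpha^{\times k}$ together with the padding $(x_1, \ldots, x_n) \mapsto (x_1, \ldots, x_n, x_n, \ldots, x_n)$ yields $V^{\times n} = \bigcup_\alpha U_\alpha^{\times n}$ for all $n \leq k$. Pulling back under the projection $C[n, V] \to V^{\times n}$ exhibits $\{C[n, U_\alpha]\}_\alpha$ as an open cover of $C[n, V]$; since the compactified configuration space is a manifold with corners, partitions of unity exist, and Segal's theorem yields the required equivalence from the \v{C}ech nerve. The main obstacle is the cellular reduction in the previous paragraph: one must carefully justify interchanging the \v{C}ech-diagram homotopy colimit with the sequential colimit and the pushouts that build $\widetilde{C}^{fr}_{(k)}[\bullet, U]$, using the naturality in $U$ of the cell structure. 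The case where $M$ is not parallelized requires a further refinement of the filtration via a cell decomposition of $M$, exactly as in the proof of Lemma~\ref{l_reduced_cofibr}, but introduces no new ideas. The argument on $^\delta\Man_m$ is formally identical once $\J_k$-covers are interpreted in the evident discrete fashion.
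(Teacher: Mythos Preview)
Your argument is sound but takes a genuinely different route from the paper's. The paper does \emph{not} dualize to a codescent statement for the sources; instead it works directly on the target side. It writes a point of $\holim_{S} R_k(U_S,\calN)$ as a family $G(\vec\lambda,\widetilde X)$ with $\vec\lambda\in\Delta^I$, picks a partition of unity $\vec\psi\colon Sub_{\leq k}(V)\to\Delta^I$ subordinate to the cover $\{Sub_{\leq k}(U_\alpha)\}$, and tries to define a homotopy inverse by $(sG)(\widetilde X)=G(\vec\psi(\widetilde g(\widetilde X)),\widetilde X)$. The obstruction is that $\widetilde g$ (number of geometrically distinct points, including hairs) is discontinuous because short hairs can vanish; the paper repairs this with an explicit endomorphism $\Xi$ that halves all hair lengths and a weighted average $\vec\phi$ of the $\vec\psi(\widetilde g_i(\widetilde X))$ over the successive truncations $\widetilde g_i$, then checks by hand that the resulting $s$ is a two-sided homotopy inverse. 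So both proofs ultimately rest on a partition-of-unity argument, but yours places it on each $C[n,V]$ after a cellular reduction, while the paper places a single one on $Sub_{\leq k}(V)$ and never invokes the cell structure of $\widetilde C^{fr}_{(k)}$. Your approach is cleaner conceptually and isolates the descent input $(**)$, at the cost of relying on model-categorical facts (free functors and hocolims commute, cofibrant sources give derived mapping spaces, pushouts along cofibrations are homotopy pushouts); the paper's approach is entirely elementary and yields an explicit homotopy, but the $\Xi$--$\vec\phi$ construction is ad hoc. One point to tighten in your write-up: the top-left corner of the pushout~(\ref{eq_pushout2}) involves $\partial\bigl(C[\ell+i,U]\times[0,1]^i/\Sigma_i\bigr)$, not just $\partial C[\ell+i,U]$, so you should note that this boundary is itself a (homotopy) pushout of pieces to which $(**)$ and its boundary analogue apply.
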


We prove this result in Section~\ref{s3}.  The space~\eqref{eq_rmod_maps} looks almost like the space of sections of a  stratified fiber bundle over the filtered space
$$
Sub_{\leq 0}(U)\subset Sub_{\leq 1} (U) \subset Sub_{\leq 2} (U) \subset Sub_{\leq 3} (U)\subset\ldots
\subset Sub_{\leq k}(U).
$$
Our argument is thus a slight adjustment of the proof of a similar result that the functor $U\mapsto Maps(U^{\times k},X)$ is polynomial of degree $\leq k$, see \cite[Proposition~3.1]{GKW}.

\begin{proposition}\label{p_eval}
The composition
$$
\Emb(U,N)\stackrel{ev}{\longrightarrow}\underset{\F_m^{fr}}{\Rmod}{}_{\leq k}\left(C^{fr}[\bullet,U],C^{m\_fr}[\bullet,N]\right)\longrightarrow \underset{\F_m^{fr}}{\Rmod}{}_{\leq k}\left(\widetilde{C}_{(k)}^{fr}[\bullet,U],C^{m\_fr}[\bullet,N]\right)
\eqno(\numb)\label{eq_eval}
$$
is a homotopy equivalence whenever $U$ is a disjoint union of $\leq k$ $m$-balls.
\end{proposition}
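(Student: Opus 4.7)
Write $U = \coprod_{j=1}^{n} D^{m}$ with $n \leq k$. The idea is to identify both the source and the target of \eqref{eq_eval} with the framed configuration space $C^{m\_fr}[n, N]$ and to observe that the evaluation map $\ev$ implements the tautological comparison between these two identifications.

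For the source I would invoke the standard fact that evaluation at the ball centers,
\[
\Emb(U, N) \longrightarrow C^{m\_fr}(n, N) \hookrightarrow C^{m\_fr}[n, N], \qquad f \longmapsto \bigl(f(0_j), df|_{0_j}\bigr)_{j=1}^{n},
\]
is a homotopy equivalence (the inverse is obtained via a tubular-neighborhood / exponential-map construction); here $0_j$ denotes the center of the $j$-th ball, and $C^{m\_fr}(n, N) \hookrightarrow C^{m\_fr}[n, N]$ is the inclusion of the interior of the Fulton-MacPherson compactification, itself a homotopy equivalence. For the target, let $X_0 \in C^{fr}[n, U]$ denote the canonical framed configuration placing a standardly framed point at each $0_j$. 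The central claim will be that evaluation at $X_0$,
\[
\ev_{X_0}\colon \underset{\F_m^{fr}}{\Rmod}{}_{\leq k}\!\bigl(\widetilde{C}_{(k)}^{fr}[\bullet, U],\ C^{m\_fr}[\bullet, N]\bigr) \longrightarrow C^{m\_fr}[n, N],
\]
is a weak equivalence. Granting this, the composition $\ev_{X_0} \circ \ev$ is the source identification by direct inspection, and the proposition follows.

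To prove that $\ev_{X_0}$ is a weak equivalence I would induct over the filtration \eqref{eq_filtration2}, refined at each stage by the secondary filtration by hair count from \eqref{eq_pushout2}. Each pushout step attaches a free right $\F_m^{fr}$-module $\mathrm{Free}_{\F_m^{fr}}(A; \ell)$, so by adjunction the mapping space is realized as a finite iterated pullback of $\Sigma_\ell$-equivariant mapping spaces $\map^{\Sigma_\ell}\!\bigl(A, C^{m\_fr}[\ell, N]\bigr)$. Because $U$ is a disjoint union of $n$ contractible balls, every generating stratum of $\widetilde{C}_{(k)}^{fr}[\bullet, U]$ is parametrized, up to contractible choice, by the operadic action of $\F_m^{fr}$ (acting componentwise within each ball) on $X_0$, together with the $\Sigma_n$-action permuting the balls. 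The right-module axiom then forces the value of $\phi$ on each new generator to be a specific expression in $\phi(X_0)$, so the fiber of $\ev_{X_0}$ at each inductive stage is a mapping space out of a contractible cell (the $[0,1]^i$-cube of hair lengths times the local Fulton-MacPherson strata inside each ball) and is therefore contractible.

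The main obstacle will be handling the boundary strata cleanly: hairs at maximal length $t = 1$ encode the $\F_m^{fr}(0)$-action of forgetting configuration points, while clustering points correspond to corner strata of $C[\ell + i, U]$. One has to check that the compatibility conditions these impose are exactly absorbed by the $t \in (0, 1)$ interpolation built into $\widetilde{C}^{fr}$. This is precisely what the hairy cofibrant replacement was designed for: it converts the rigid $\F_m^{fr}(0)$-equivariance of $\phi$ into a contractible homotopy condition, guaranteeing that once $\phi(X_0) \in C^{m\_fr}[n, N]$ is specified, every boundary identification is satisfied up to contractible choice, and thus that $\ev_{X_0}$ is a weak equivalence.
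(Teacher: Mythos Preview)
Your overall strategy is the paper's: identify both source and target with $C^{m\_fr}[n,N]$ by evaluating at the framed configuration $X_0$ of ball centers, then observe the square commutes. Where you diverge is in the argument that $\ev_{X_0}$ is an equivalence. You propose to climb the filtration~\eqref{eq_filtration2} cell by cell, using the pushouts~\eqref{eq_pushout2} and arguing that at each stage the value of $\phi$ on the new generators is forced, up to contractible choice, by $\phi(X_0)$. The paper bypasses this induction entirely with a single geometric observation: the submodule $C^{fr}[\bullet,L]\subset\widetilde{C}^{fr}_{(k)}[\bullet,U]$ generated by $X_0$ is literally the free $k$-truncated right $\F_m^{fr}$-module $Free_{\F_m^{fr}}(\Sigma_n;n)$, and the radial contraction of each ball to its center gives an explicit right-module retraction $\widetilde{C}^{fr}_{(k)}[\bullet,U]\to C^{fr}[\bullet,L]$. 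Freeness then identifies $\Rmod_{\leq k}(C^{fr}[\bullet,L],C^{m\_fr}[\bullet,N])\cong C^{m\_fr}[n,N]$ immediately.

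Your induction is not wrong, but it is doing by hand what this retraction does in one move: the phrase ``parametrized, up to contractible choice, by the operadic action on $X_0$'' \emph{is} the retraction, stratum by stratum. Note also that your induction cannot literally start at filtration level $0$, since $X_0$ only appears at level $n$; you would really be inducting on the cofibration $C^{fr}[\bullet,L]\hookrightarrow\widetilde{C}^{fr}_{(k)}[\bullet,U]$ and showing each attached cell contributes trivially---which is again just the retraction in disguise. The paper's formulation also makes the ``main obstacle'' you anticipate (boundary strata, hairs of length $1$) disappear: the contraction is a map of right $\F_m^{fr}$-modules on the nose, so compatibility with $\F_m^{fr}(0)$ is automatic rather than something to be checked inductively.
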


The proof is given in Section~\ref{s4}.

\begin{lemma}\label{l_homeo}
In case $M$ is orientable, respectively parallelized, one has a natural homeomorphism of spaces
$$
\underset{\F_m^{fr}}{\Rmod}{}_{\leq k}\left(\widetilde{C}_{(k)}^{fr}[\bullet,M],C^{m\_fr}[\bullet,N]\right)\cong
\underset{\F_m^{or}}{\Rmod}{}_{\leq k}\left(\widetilde{C}_{(k)}^{or}[\bullet,M],C^{m\_fr}[\bullet,N]\right),
$$
respectively
$$
\underset{\F_m^{fr}}{\Rmod}{}_{\leq k}\left(\widetilde{C}_{(k)}^{fr}[\bullet,M],C^{m\_fr}[\bullet,N]\right)\cong
\underset{\F_m}{\Rmod}{}_{\leq k}\left(\widetilde{C}_{(k)}[\bullet,M],C^{m\_fr}[\bullet,N]\right).
$$
\end{lemma}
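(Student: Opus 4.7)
The plan is to exhibit both homeomorphisms through an explicit bijection on morphism sets, regarded as an instance of the induction--restriction adjunction along the operad inclusions $\F_m^{or}\hookrightarrow\F_m^{fr}$ (oriented case) and $\F_m\hookrightarrow\F_m^{fr}$ (parallelized case). The underlying geometric fact is that, after fixing an orientation on $M$ and an orientation-reversing element $r\in\GL_m\setminus\GL_m^+$ (respectively, after fixing a parallelization of $TM$), every partial framing $\alpha\colon\R^m\hookrightarrow T_xM$ at a point $x\in M$ admits a unique decomposition $\alpha=\alpha^+\circ s$ with $\alpha^+$ orientation-preserving and $s\in\{1,r\}$ (respectively, is canonically identified with an element $s\in\GL_m$ via the trivialization). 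Applying this pointwise to each of the $\ell$ framings of a configuration $Y\in\widetilde{C}^{fr}[\ell,M]$ yields a canonical decomposition
$$
Y=Y^+\circ(s_1,\ldots,s_\ell),
$$
with $Y^+\in\widetilde{C}^{or}[\ell,M]$ (resp.\ $\widetilde{C}[\ell,M]$) and $(s_1,\ldots,s_\ell)\in\F_m^{fr}(1)^{\times\ell}$, the outer composition being the degree-$1$ right action of $\F_m^{fr}$.

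First I would define the restriction map on morphism spaces as precomposition with the natural sub-right-module inclusion $\widetilde{C}_{(k)}^{or}[\bullet,M]\hookrightarrow\widetilde{C}_{(k)}^{fr}[\bullet,M]$ (resp.\ $\widetilde{C}_{(k)}[\bullet,M]\hookrightarrow\widetilde{C}_{(k)}^{fr}[\bullet,M]$); this is well-defined because $C^{m\_fr}[\bullet,N]$ restricts naturally to a right module over the subordinate operad. Next I would construct the inverse. Given an $\F_m^{or}$-equivariant map $g\colon\widetilde{C}_{(k)}^{or}[\bullet,M]\to C^{m\_fr}[\bullet,N]$ (resp.\ $\F_m$-equivariant $g\colon\widetilde{C}_{(k)}[\bullet,M]\to C^{m\_fr}[\bullet,N]$), define
$$
\tilde g(Y):=g(Y^+)\circ(s_1,\ldots,s_\ell),
$$
using the right $\F_m^{fr}$-action on $C^{m\_fr}[\bullet,N]$. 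Continuity is immediate since $Y\mapsto(Y^+,(s_i))$ is continuous, and the two composites of restriction and extension are the identity by uniqueness of the decomposition.

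The main obstacle is verifying that $\tilde g$ is $\F_m^{fr}$-equivariant, not merely subordinate-operad equivariant. The key observation is that $\F_m^{fr}$ is generated as an operad by the subordinate operad $\F_m^{or}$ (resp.\ $\F_m$) together with its degree-$1$ part $\GL_m=\F_m^{fr}(1)$ via the semidirect-product construction of~\cite{Salvatore}. Equivariance under the degree-$1$ part $\GL_m$ holds by construction of $\tilde g$. For equivariance under the subordinate operad, the task reduces to a bookkeeping verification that insertions $Y\circ_j c$ with $c$ of arity $k\geq 1$ propagate the sign $s_j$ into the $k$ newly-created slots and leave the other signs fixed, whereas the degree-$0$ insertion $\circ_j e$ (creating a hair, cf.~\eqref{eq_right_act2}) simply deletes $s_j$ because the hair data $(\bar y;\bar t)$ carries no framing; in both cases, matching $\tilde g(Y\circ_j c)=\tilde g(Y)\circ_j c$ is an instance of the semidirect-product composition law (so the $\GL_m^+$-conjugate of $c$ required on the $Y^+$-side again lies in $\F_m^{or}$) combined with the equivariance of $g$. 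The truncated case follows component-wise since all constructions respect both the degree and the filtration by geometrically distinct points.
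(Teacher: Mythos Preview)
Your argument is correct and spells out precisely the content that the paper leaves implicit: the paper's entire proof reads ``This lemma is obvious by inspection.'' What you have written is the induction--restriction adjunction for right modules along $\F_m^{or}\hookrightarrow\F_m^{fr}$ (resp.\ $\F_m\hookrightarrow\F_m^{fr}$), made concrete via the semidirect-product description $\F_m^{fr}=\F_m\rtimes\GL_m$, and this is exactly the inspection the author has in mind---the source module is induced because $C^{fr}[\ell,M]\cong C^{or}[\ell,M]\times_{(\GL_m^+)^{\ell}}(\GL_m)^{\ell}$ when $M$ is oriented (and similarly in the parallelized case).
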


This lemma is obvious by inspection.

\section{Proof of Proposition~\ref{p_hom_sheaf}}\label{s3}
We need to show that the functor that assigns to any open set $U$ the space
$$
R_k(U,\calN):=\underset{\F_m^{fr}}{\Rmod}{}_{\leq k}\left(\widetilde{C}_{(k)}^{fr}[\bullet,U],\calN(\bullet)\right)
$$
is a homotopy sheaf on $\calO(M)$ with respect to the Grothendieck topology $\J_k$. This means that for any cover $\{U_i\subset U\}_{i\in I}$ such that $\bigcup_{i\in I} U_i^{\times k}=U^{\times k}$ one has that the natural map
$$
R_k(U,\calN)\stackrel{\simeq}{\longrightarrow}\holim_{\emptyset\neq S\subset I} R_k(U_S,\calN)
$$
is a weak equivalence. In the above the homotopy limit is taken over the category of finite non-empty subsets of $I$, and $U_S=\bigcap_{i\in S} U_i$.

Let us prove first that the functor
$$
U\mapsto\bar{R}_k(U,\calN):=\underset{\bar{\F}_m^{fr}}{\Rmod}{}_{\leq k}\left(C^{fr}[\bullet,U],\calN(\bullet)\right)
$$
is polynomial of degree $\leq k$. We don't need this result, but technically it is easier, and the proof of the statement that we need is just a slight modification of the argument given below.

For a set $J$ denote by $\Delta^J$ its {\it formal convex hull}. It consists of linear combinations $\vec{\lambda}=\sum_{i\in J} \lambda_i\langle i\rangle$ of elements in $J$, such that $\sum_{i\in J}\lambda_i=1$; $0\leq \lambda_i\leq 1$, $i\in J$; and the support of $\vec{\lambda}$ is finite:
$$
supp(\vec{\lambda})=\{i\in J\, |\, \lambda_i\neq 0\}<\infty.
$$
In case $J$ is finite $\Delta^J$ is a simplex. In general case it is the realization of the full combinatorial simplicial complex on the vertex set $J$. In particular we have that $\Delta^J$ is naturally a $CW$-complex. The space  $\holim_{\emptyset\neq S\subset I} \bar{R}_k(U_S,\calN)$ can be described as the space of natural transformations between the functor that assigns $\Delta^S$ to any finite non-empty set $S\subset I$ and the functor that assigns $\bar{R}_k(U_S,\calN)$ to $S\subset I$. Thus a point $G$ in the homotopy limit is given by a family of maps
$$
G_S\colon\Delta^S\to \bar R_k(U_S,\calN),\quad \emptyset\neq S\subset I.
$$
By adjunction this family of maps can be written as another collection of maps
$$
G_{S,k}\colon \Delta^S\times C^{fr}[k,U_S]\to \calN(k), \quad \emptyset\neq S\subset I,\quad k\geq 0,
$$
that satisfy certain boundary conditions. In particular for $S_1\subset S_2$, one has $\Delta^{S_1}\subset\Delta^{S_2}$, $U_{S_1}\supset U_{S_2}$, and
$$
G_{S_2,k}\Bigr|_{\Delta^{S_1}\times C^{fr}[k,U_{S_2}]}=G_{S_1,k}\Bigr|_{\Delta^{S_1}\times C^{fr}[k,U_{S_2}]}.
$$
For this reason we drop the subindices $S$ and $k$ and will simply write $G(\vec{\lambda},X)$, where $\vec{\lambda}\in\Delta^I$, and $X\in C^{fr}[k,U]$ for some $k\geq 0$. Notice that $G(\vec{\lambda},X)$ is defined if and only if $g(X)\subset U_{supp(\vec{\lambda})}$.

One has a natural inclusion
$$
i\colon \bar{R}_k(U,\calN){\longrightarrow}\holim_{\emptyset\neq S\subset I} \bar{R}_k(U_S,\calN)
$$
that sends $F$ to $(iF)(\vec{\lambda},X)=F(X)$. Let us describe the homotopy inverse to $i$. Recall $Sub_{\leq k}(U)$. It is homeomorphic to a $CW$-complex and therefore is paracompact. One also has that $Sub_{\leq k}(U_i)$, $i\in I$, is an open cover of $Sub_{\leq k}(U)$, since $\bigcup_{i\in I} U_i^{\times k}=U^{\times k}$. Let $\vec{\psi}=\sum_{i\in I}\psi_i\langle i\rangle$ be a partition of unity on $Sub_{\leq k}(U)$ subordinate to the above cover. We view it as a continuous map
$$
\vec{\psi}\colon  Sub_{\leq k} (U)\to \Delta^I
$$
that has the property $g\subset U_{supp(\vec{\psi}(g))}$ for any $g\in Sub_{\leq k}(U)$. On the other hand, we also have that the map
$$
g\colon\coprod_{i=0}^k C^{fr}[i,U]\to Sub_{\leq k}(U)
$$
is continuous. Now for $G\in \holim_{\emptyset\neq S\subset I} \bar{R}_k(U_S,\calN)$ viewed as a function $G(\vec{\lambda},X)$, we define $s(G)\in \bar{R}_k(U,\calN)$ by the formula
$$
s(G)(X)=G(\vec{\psi}(g(X)),X).
\eqno(\numb)\label{eq_inv_homot1}
$$
Since the right action of $\bar{\F}_m^{fr}$ preserves $g(X)$, see~\eqref{eq_g_invariance}, we get that $s(G)$ is a morphism of right $\bar{\F}_m^{fr}$-modules. It is easy to see that $s\circ i$ is identity, whereas $s\circ i$ sends $G(\vec{\lambda},X)$ to
$$
(si)(G)(\vec{\lambda},X)=G(\vec{\psi}(g(X)),X).
$$
The homotopy between $G$ and $(si)(G)$ is given by
$$
G(\tau\cdot \vec{\psi}(g(X))+ (1-\tau)\cdot\vec{\lambda},X),\quad 0\leq \tau\leq 1.
$$
Thus we proved that $\bar{R}_k(-,\calN)$ is a homotopy $\J_k$-sheaf.

Now, let us establish the same result for $R_k(-,\calN)$. One still has an obvious map
$$
i\colon R_k(U,\calN) \longrightarrow\holim_{\emptyset\neq S\subset I} R_k(U_S,\calN)
\eqno(\numb)\label{eq_can_map}
$$
defined as
$$
(iF)(\vec{\lambda},\widetilde{X})=F(\widetilde{X}),
$$
where $\widetilde{X}\in\widetilde{C}^{fr}_{(k)}[j,U]$, $0\leq j\leq k$.
For the homotopy inverse, unfortunately the formula~\eqref{eq_inv_homot1} does not work since the analogous map
$$
\widetilde{g}\colon \coprod_{i=0}^k \widetilde{C}_{(k)}^{fr}[i,U]\to Sub_{\leq k}(U),
$$
that assigns the set of geometrically distinct points, is not continuous anymore. To remedy this we first introduce the map
$$
\Xi\colon\widetilde{C}^{fr}[\bullet,U]\to \widetilde{C}^{fr}[\bullet,U]
$$
that removes all hairs of $\widetilde{X}\in \widetilde{C}^{fr}[k,U]$ of  length $t\leq \frac 12$, and contracts every hair to the length $2t-1$ if its length $t\geq \frac 12$. It is easy to see that $\Xi$ is an endomorphism of $\widetilde{C}^{fr}[k,U]$ as a right $\F_m^{fr}$-module. Moreover $\Xi$ preserves filtration~\eqref{eq_filtration2} and is homotopic to the identity in the space of filtration  preserving endomorphisms.

Now let $\widetilde{X}\in\widetilde{C}^{fr}[k,U]$ has the form $(X;y_1\ldots y_\ell;t_1\ldots t_\ell)$ where
$0<t_1\leq t_2\leq \ldots\leq t_h\leq \frac 12<  t_{h+1}
\leq\ldots\leq t_\ell\leq 1$. Define $\widetilde{g}_i(\widetilde{X})=g(X)\bigcup\{y_{i+1},y_{i+2},\ldots,y_\ell\}$. One obviously has
$$
\widetilde{g}(\widetilde{X})=\widetilde{g}_0(\widetilde{X})\supset \widetilde{g}_1(\widetilde{X})
\supset\ldots\supset \widetilde{g}_h(\widetilde{X})=\widetilde{g}(\Xi(\widetilde{X})).
$$
Notice that $\widetilde{g}_i$ are not uniquely defined, and therefore are also discontinuous, in case $\widetilde{X}$ has several hairs of the same length. For example if $t_i=t_{i+1}$ and all other $t_j$ are different there is a choice which hair we take as the $i$th one and which we take as the $(i+1)$st. Thus in this particular case the set $\widetilde{g}_i(\widetilde{X})$ is not uniquely defined, but however all the other sets including $\widetilde{g}_{i-1}(\widetilde{X})$ are defined uniquely. Now define a map
$$
\vec{\phi}\colon\coprod_{i=0}^k\widetilde C_{(k)}^{fr}[i,U]\to\Delta^I
$$
as follows
$$
\vec{\phi}(\widetilde{X})=\sum_{i=0}^h2(t_{i+1}-t_i)\vec{\psi}(\widetilde{g}_i(\widetilde{X})),
$$
where $t_0=0$ and  abusing notation  $t_{h+1}=\frac 12$. Thus the sum of coefficients $\sum_{i=0}^h2(t_{i+1}-t_i)=1$. We argue below that $\vec{\phi}$ is continuous. Recalling~\eqref{eq_w_eq}
$$
\widetilde{C}_{(k)}^{fr}[i,U]=\left.\left(\coprod_{j=0}^{k-i}C^{fr}[i,U]\times\left(U^{\times j}\times [0,1]^j\right)/\Sigma_j\right)\right/\sim.
$$
Thus we are left to check that the equivalence relations~\eqref{eq_hairs_sigma}, \eqref{eq_hairs_0}, \eqref{eq_hairs_collide} are respected by $\vec{\phi}$, which is an easy exercise.

Now we  are ready to define a map $s$  homotopy inverse to~\eqref{eq_can_map}:
$$
(sG)(\widetilde{X})=G\left(\vec{\phi}(\widetilde{X}),\Xi(\widetilde{X})\right).
$$
This formula is well defined since $\widetilde{g}(\Xi(\widetilde{X}))\subset U_{supp(\vec{\phi}(\widetilde{X}))}$. Indeed, $\widetilde{g}(\Xi(\widetilde{X}))=\widetilde{g}_h(\widetilde{X})\subset
\widetilde{g}_i(\widetilde{X})\subset U_{supp(\vec{\psi}(\widetilde{g}_i(\widetilde{X})))}$,
for $0\leq i\leq h$. On the other hand $\bigcap_{i=0}^h U_{supp(\vec{\psi}(\widetilde{g}_i(\widetilde{X})))} \subset  U_{supp(\vec{\phi}(\widetilde{X}))}$.

One also has that $sG$ is a morphism of right modules over $\F_m^{fr}$ since the right $\F_m^{fr}$ action
does not change $\vec{\phi}(\widetilde{X})$:
$$
\vec{\phi}(\widetilde{X}\circ_i c)=\vec{\phi}(\widetilde{X}),
$$
for every $c\in \F_m^{fr}(\bullet)$, since again each $\widetilde{g}_i$ is preserved by this action.

We check that $s$ is a homotopy inverse to $i$. One has $(si)(F)=F\circ \Xi$. Since $\Xi$ is homotopic to the identity $(si)$ is so. For the opposite composition
$$
(is)(G)(\vec{\lambda},\widetilde{X})=G\left(\vec{\phi}(\widetilde{X}),\Xi(\widetilde{X})\right).
$$
The homotopy
$$
G(\tau\cdot \vec{\lambda}+(1-\tau)\cdot\vec{\phi}(\widetilde{X}),\Xi(\widetilde{X})), \quad 0\leq \tau\leq 1,
$$
shows that $(is)$ is homotopic to the map that sends $G(\vec{\lambda},\widetilde{X})$ to $H(\vec{\lambda},\widetilde{X})=G(\vec{\lambda},\Xi(\widetilde{X}))$. Finally using the homotopy between $\Xi$ and the identity we see that $(is)$ is also homotopic to the identity.

This finishes the proof of Proposition~\ref{p_hom_sheaf}.

\section{Proof of Proposition~\ref{p_eval}}\label{s4}
We need to show that the natural evaluation map $ev_k\colon\Emb(U,N)\to \underset{\F_m^{fr}}\Rmod{}_{\leq k}\left(\widetilde{C}^{fr}_{(k)}[\bullet,U], C^{m\_fr}[\bullet,N]\right)$ is a homotopy equivalence whenever $U$ is a disjoint union of $\ell$ balls with $\ell\leq k$. Let $L\subset U$ be a finite subset of $U$ with exactly one point in each connected component. We fix a bijection $b\colon\{1\ldots\ell\}\to L$ and also framings $\alpha_i\colon\R^m\stackrel{\simeq}{\longrightarrow}
T_{b(i)}U$ for each point in $L$. We denote by $L^{fr}$ the corresponding point in $C^{fr}(\ell,M)$. One has a natural evaluation map
$$
Ev_{L^{fr}}\colon\Emb(U,N)\stackrel{\simeq}{\longrightarrow} C^{m\_fr}(L,N),
$$
that sends $f\in\Emb(M,N)$ to the configuration $f(b(1))\ldots f(b(\ell))$ with framings defined as compositions $\R^m\stackrel{\alpha_i}{\longrightarrow}T_{b(i)}U\stackrel{f_*}{\longrightarrow} T_{f(b(i))}N$. One can easily see that this map is a homotopy equivalence, see for example~\cite{GKW}.

Let $C^{fr}[\bullet,L]$ denote the right $\F_m^{fr}$ submodule of $\widetilde{C}^{fr}[\bullet,U]$ generated by $L^{fr}\in C^{fr}[\ell,L]$. It is easy to see that this submodule is naturally homeomorphic to a free right module generated by $\Sigma_\ell$  in degree $\ell$. In other words the map
of right $\F_m^{fr}$ modules
$$
Free_{\F_m^{fr}}(\Sigma_\ell,\ell)\to C^{fr}[\bullet,L]
$$
that sends the unit of $\Sigma_\ell$ to $L^{fr}$, is a homeomorphism.  Therefore the $k$-th truncation
$C^{fr}[\bullet\bigr|_{\leq k},L]$ is also a truncated right module freely generated by $\Sigma_\ell$ in degree $\ell$ (here we use the fact that $\ell\leq k$).

The inclusion
$$
C^{fr}[\bullet,L]\subset \widetilde{C}^{fr}[\bullet,U]
$$
is a homotopy equivalence of right $\F_m^{fr}$ modules. The homotopy inverse is obtained by contracting each disc in $U$ to the corresponding point in $L$, thus  sending usual configurations to infinitesimal configurations. Notice that the same is true for the inclusion of the truncations
$$
C^{fr}[\bullet\bigr|_{\leq k},L]\subset \widetilde{C}^{fr}_{(k)}[\bullet\bigr|_{\leq k},U].
$$

As a result we get a sequence of homotopy equivalences
\begin{multline}
\underset{\F_m^{fr}}\Rmod{}_{\leq k}\left(\widetilde{C}^{fr}_{(k)}[\bullet,U], C^{m\_fr}[\bullet,N]\right)
\stackrel{\simeq}{\longrightarrow} \underset{\F_m^{fr}}\Rmod{}_{\leq k}\left(C^{fr}[\bullet,L], C^{m\_fr}[\bullet,N]\right) \stackrel{\cong}{\longrightarrow}\\
\stackrel{\cong}{\longrightarrow}
\Maps_{\Sigma_\ell}(\Sigma_{\ell},C^{m\_fr}[\ell,N])
\stackrel{\cong}{\longrightarrow}
C^{m\_fr}[\ell,N].
\label{eq_long_composition}
\end{multline}
The last two maps in~\eqref{eq_long_composition} are homeomorphisms.
Finally we notice that the diagram
$$
\xymatrix{
\Emb(U,N)\ar[r]\ar[d]^\simeq_{Ev_{L^{fr}}}& \underset{\F_m^{fr}}\Rmod{}_{\leq k}\left(\widetilde{C}^{fr}_{(k)}[\bullet,U], C^{m\_fr}[\bullet,N]\right)\ar[d]^\simeq\\
C^{m\_fr}(\ell,N)\ar@{^{(}->}[r]^{\simeq}& C^{m\_fr}[\ell,N]
}
$$
is commutative (the right arrow is the composition~\eqref{eq_long_composition}). We conclude that the top arrow must be an equivalence since all the other maps are.

\section{Proof of Theorem~\ref{t_cont_free}}\label{s5}
By the universal property of polynomial functors~\cite{Weiss} one only needs to show that the functor
$$
U\mapsto \underset{\Edm}\hRmod{}_{\leq k}\left(\Emb(\bullet,U), F(\bullet)\right),
\quad U\in{}^\delta\Man_m,
\eqno(\numb)\label{eq_rmod_functor}
$$
is polynomial of degree $\leq k$, and also that for every $U$ which is a disjoint union of $\leq k$ balls this functor produces a space naturally equivalent to $F(U)$. Notice that the latter statement is straighforward from the Yoneda lemma and also the fact that the category of $k$-truncated right modules over $\Edm$ is equivalent to the category of contravariant functors from  the category $\calO_{\leq k}$.
Due to the zigzag of equivalences of operads~\cite{Salvatore}:
$$
\Edm\longleftarrow W(\Edm)\longrightarrow \F_m^{fr},
\eqno(\numb)\label{eq_zigzag_operads}
$$
and the zigzag of right modules over  $W(\Edm)$
$$
\Emb(\bullet,U)\longleftarrow W(\Emb(\bullet,U))\longrightarrow C^{fr}[\bullet,U],
$$
which is natural in  $U$, the functor~\eqref{eq_rmod_functor}
is equivalent to a similar functor
$$
U\mapsto \underset{\F_m^{fr}}\hRmod{}_{\leq k}\left(C^{fr}[\bullet,U], \ind(F)(\bullet)\right),
\eqno(\numb)\label{eq_ind_functor}
$$
where $\ind(F)$ is a certain right $\F_m^{fr}$ module obtained from $F(\bullet)$ by a natural restriction-extension construction along the zigzag~\eqref{eq_zigzag_operads}, see~\cite[Theorem~16.B]{Fresse}. By Proposition~\ref{p_hom_sheaf} the functor~\eqref{eq_ind_functor} is polynomial of degree~$\leq k$, therefore the equivalent functor~\eqref{eq_rmod_functor} is so.

\section{Spaces of long embeddings}\label{s6}
Theorem~\ref{t_cont_free} has a particularly attractive form for the spaces of higher
dimensional long knots.
Let $\Ebarmn$ denote the homotopy fiber of the inclusion
$$
\Emb_c(\R^m,\R^n)\hookrightarrow \Imm_c(\R^m,\R^n),
\eqno(\numb)\label{eq_emb_imm}
$$
where $\Emb_c(\R^m,\R^n)$, respectively $\Imm_c(\R^m,\R^n)$, is the space of
embeddings $\R^m\hookrightarrow\R^n$, respectively immersions
$\R^m\looparrowright\R^n$, coinciding with a fixed linear embedding $i\colon\R^m\hookrightarrow\R^n$ outside a
compact subset of $\R^m$.
We view $\Ebar_c(-,\R^N)$ as a cofunctor $\widetilde\calO(\R^m)\to\Top$ from the category of open sets of $\R^m$ whose complement is compact. Define $\widetilde\calO_{\leq k}(\R^m)$ as its full subcategory that consists of disjoint unions of $\leq k$ balls and one complement to a closed ball. For an isotopy invariant cofunctor $F\colon\widetilde\calO(\R^m)\to\Top$, its $k$-th Taylor approximation $T_kF$ is the homotopy right Kan extension of $F$ from $\widetilde\calO_{\leq k}(\R^m)$ to $\widetilde\calO(\R^m)$.

To take into account the behavior of embeddings at infinity, we will express spaces
of such embeddings as spaces of derived morphisms of certain {\it infinitesimal bimodules}
over the little discs operad.\footnote{In some previous works the author was using the term {\it weak bimodules} for this notion as for example in~\cite{ArTur}.}  An infinitesimal bimodule over an operad is defined in the
following way. Let $\{O(i)\}$ be an operad. An infinitesimal bimodule over $O$ is a symmetric
sequence $\{M(i), \, i\geq 0\}$ equipped with structure maps (where $i, j\ge 0$,  $1\le s\le i$,
and $\otimes$ stands for a symmetric monoidal product):
$$\circ_s\colon O(i)\otimes M(j)\longrightarrow  M(i+j-1)\quad \text{left action}$$
and
$$\circ'_s\colon M(i)\otimes O(j)\longrightarrow M(i+j-1)\quad\text{right action}$$
satisfying certain rather easily guessed associativity axioms~\cite{ArTur}. For example, left and right actions must be compatible:
$$
(o_1\circ_p m)\circ_q o_2=(o_1\circ_q o_2)\circ_{p+q-1} m,\quad 1\leq q<p\leq i,
$$
$$
(o_1\circ_pm)\circ_{q+k-1}o_2=(o_1\circ_q o_2)\circ_p m, \quad 1\leq p<q\leq i,
$$
for all $o_1\in O(i)$, $o_2\in O(j)$, and $m\in M(k)$. As above the result of composition $\circ_i(o,m)$, and $\circ'_i(m,o)$, for $o\in O(n)$, and $m\in M(k)$, is denoted by $o\circ_i m$, and $m\circ_i o$. Graphically one can view elements of $O$ and $M$ as  having a bunch of inputs and one output. In this representation the composition is shown by the usual grafting picture:


\begin{center}
\psfrag{OM}[0][0][1][0]{$o\circ_3 m$}
\psfrag{MO}[0][0][1][0]{$m\circ_2 o$}
\psfrag{o}[0][0][1][0]{$o$}
\psfrag{m}[0][0][1][0]{$m$}
\includegraphics[width=13cm]{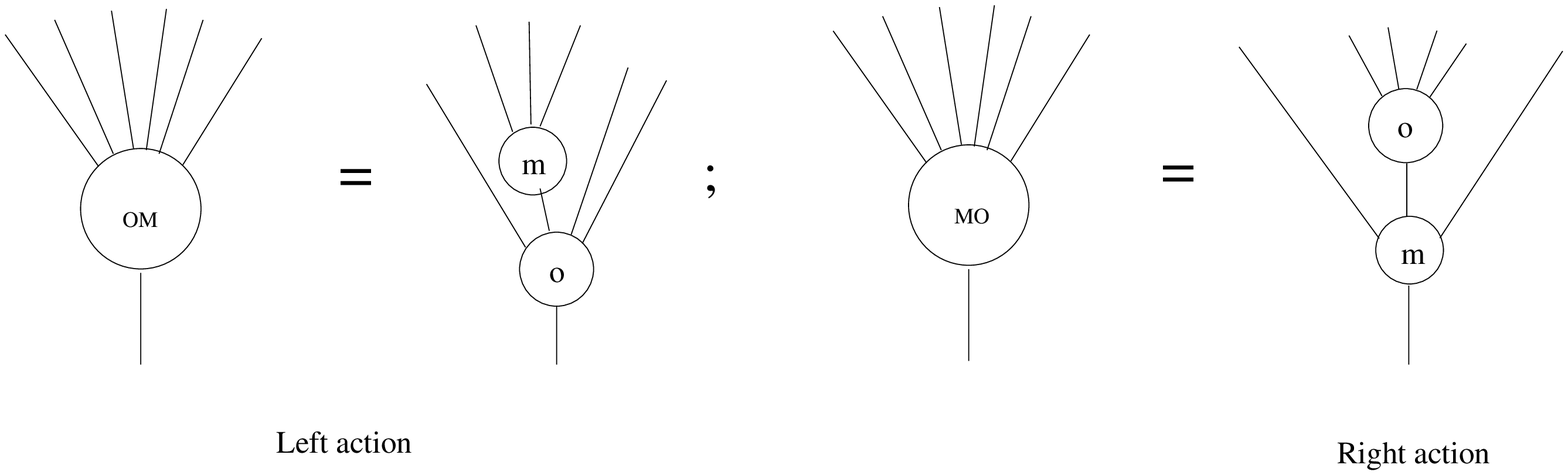}
\end{center}

 By a $k$-truncated infinitesimal bimodule over $O$ we will understand a symmetric sequence $\{M(i),\, i=0\ldots k\}$, with the above structure maps in the range where they can be defined.

As example an infinitesimal bimodule over the commutative operad is the same thing as a contravariant functor from the category $\Gamma$ of finite pointed sets. An infinitesimal bimodule over the non-$\Sigma$ associative operad is nothing but a cosimplicial object.

 As another  example relevant to us, the linear inclusion
$i\colon\R^m\hookrightarrow\R^n$ induces an inclusion of operads of little discs   $\calB_m\hookrightarrow\calB_n$, and thus both $\calB_m$ and $\calB_n$ are infinitesimal bimodules over $\calB_m$.

The category of (truncated) infinitesimal bimodules has all the pleasant formal properties of right
modules. For example, the category of infinitesimal bimodules with values in chain complexes
is (in contrast with the category of honest left modules) an abelian category with
enough projectives. Another nice property of this structure is that an equivalence of operads induces restriction and extension functors which are Quillen equivalences, similarly to the case of right modules~\cite[Theorem~16.B]{Fresse}. Let
$\hIbimod(-,-)$,  $\hIbimod_{\leq k}(-,-)$ denote the space
of derived morphisms between infinitesimal bimodules and  $k$-truncated infinitesimal bimodules respectively. 

\begin{theorem}\label{t_model_for_long_embeddings} One has natural equivalences
$$
T_k\Ebarmn\simeq \underset{\calB_m}{\operatorname{hIbimod}}{}_{\le k}(\calB_m,
\calB_n),\,\, n\geq m.
$$
In particular in the case $n> m+2$
$$\Ebar_c(\R^m,\R^n)\simeq \underset{\calB_m}{\operatorname{hIbimod}}(\calB_m,
\calB_n).$$
\end{theorem}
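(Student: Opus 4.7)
The plan is to rerun the proof machinery of Theorem~\ref{t_fult_mac} — in its parallelized form \eqref{eq_fult_mac_parall}, since $\R^m$ is parallelized — but adapted to the enlarged category $\widetilde\calO(\R^m)$. The crucial observation is that $\widetilde\calO_{\leq k}(\R^m)$ contains, besides the $k+1$ objects "disjoint unions of $\leq k$ balls," one additional object: a complement of a closed ball, which morally plays the role of "neighborhood of infinity." Continuous isotopy-invariant cofunctors on this extended category correspond not to $k$-truncated right modules over $\F_m$, but to $k$-truncated \emph{infinitesimal bimodules}: the morphisms out of the ball-complement object into disjoint unions of balls provide the left action, and the axioms of an infinitesimal bimodule are exactly the coherences coming from the composites in $\widetilde\calO_{\leq k}(\R^m)$. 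This identification should be set up first, in the same spirit as the identification of right modules over $\Edm$ with cofunctors on $\calO_{\leq k}$ done in Section~\ref{s1}.

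Next I would enhance the hairy configuration space $\widetilde C[\bullet,U]$ of Section~\ref{s2} by recording, in addition to a configuration in $U$ with hairs, the "framed disc at infinity" accounting for the fixed behavior of embeddings outside a compact set. This endows $\widetilde C[\bullet,\R^m]$ with the structure of a cofibrant infinitesimal bimodule over $\F_m$ that models $\calB_m$ itself (after passing through the zigzag~\eqref{eq_zigzag_operads} and using that operad equivalences induce Quillen equivalences on infinitesimal bimodules, the analog of~\cite[Theorem~16.B]{Fresse} cited in the paper). On the target side, the fiber $C^{m\_fr}[\bullet,\R^n]$ taken over $\Imm_c$ — together with the left action induced by the linear inclusion $i\colon\R^m\hookrightarrow\R^n$ at infinity — models $\calB_n$ as an infinitesimal $\calB_m$-bimodule.

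With these two enhancements in place, the proof follows the same two-step pattern as in Sections~\ref{s3}--\ref{s4}. First, adapt Proposition~\ref{p_hom_sheaf} to show that $U\mapsto\hIbimod_{\leq k}(\widetilde C[\bullet,U],\calN)$ is a homotopy $\J_k$-sheaf on $\widetilde\calO(\R^m)$; the same partition-of-unity homotopy inverse works since the "infinity" slot is fixed and does not depend on $U$. Second, adapt Proposition~\ref{p_eval} to show that, for $U\in\widetilde\calO_{\leq k}(\R^m)$, the evaluation map from $\Ebar_c(U,\R^n)$ is an equivalence; on such basic $U$ the left-hand side reduces to a space of framed configurations with prescribed boundary behavior, matching the free infinitesimal bimodule in question. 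The first statement of the theorem follows by the universal property of polynomial functors, exactly as in Section~\ref{s5}. The second statement, for $n>m+2$, follows by taking the limit in $k$ and invoking the Goodwillie--Klein--Weiss convergence theorem~\cite{GKW} for the Weiss tower of long embeddings.

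The main obstacle I anticipate is the bookkeeping around the "infinity slot": one must construct a cofibrant model of $\widetilde C[\bullet,U]$ not merely as a right $\F_m$-module but as an infinitesimal $\F_m$-bimodule (with a compatible filtration generalizing~\eqref{eq_filtration2}), and verify that the sheafification and evaluation arguments go through unchanged in this richer algebraic setting. In particular, checking that the homotopy inverse constructed in Section~\ref{s3} intertwines both the right action and the new left action, and that it respects the basepoint at infinity, is the step most likely to require care.
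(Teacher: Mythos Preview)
Your proposal is correct and follows essentially the same route as the paper: build a cofibrant infinitesimal $\F_m$-bimodule model via hairy configurations, rerun the sheaf argument of Section~\ref{s3} and the evaluation argument of Section~\ref{s4} on $\widetilde\calO(\R^m)$, and then transport along the operad zigzag~\eqref{eq_zigzag_operads} to replace $\F_m$ by $\calB_m$; the $n>m+2$ case is Goodwillie--Klein--Weiss convergence.

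The one place where the paper is more concrete than your sketch is the choice of model. Rather than ``adding a framed disc at infinity'' to $\widetilde C[\bullet,U]$, the paper works with the pointed sphere compactifications $C_*[\bullet,S^m]$ and $C_*[\bullet,S^n]$ --- configurations in $S^m$ (resp.\ $S^n$) with one point pinned at $\infty$. The infinitesimal bimodule structure is then entirely geometric: the right $\F_m$-action inserts infinitesimal configurations at finite points, and the left action produces the boundary ``strata at infinity'' of the Axelrod--Singer compactification. A short lemma (Lemma~\ref{l_inf_bimod_equiv}) identifies $C_*[\bullet,S^m]\simeq\F_m$ as infinitesimal $\F_m$-bimodules via the quotient $C(k,\R^m)\to C(k,\R^m)/G$, which gives the bridge to $\calB_m$ cleanly. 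The hairy cofibrant replacement $\widetilde C_*[\bullet,S^m]$ then has hairs living in $S^m$, with the extra relation that a hair disappears when it reaches $\infty$. This packaging avoids having to speak of a separate ``infinity slot'' and makes the compatibility of left and right actions (your anticipated obstacle) automatic from the manifold-with-corners structure of $C_*[k,S^m]$.
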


A discrete version of this theorem appeared in~\cite{ArTur}.
In the last statement of the theorem we use the unpublished result of Goodwillie, Klein, and Weiss about the convergence of the embedding tower in codimension~$\geq 3$.
This statement  generalizes Sinha's
production~\cite{Sinha-OKS} of a cosimplicial space $\calK_n^\bullet$ whose homotopy
totalization $\Tot \calK_n^\bullet$ is weakly equivalent to $\Ebar_c(\R^1,\R^n)$, $n\geq 4$.
This cosimplicial object arises from an operad $\calK_n$ equipped with a map $\Assoc\to
\calK_n$, where $\Assoc$ is the associative operad (which is equivalent to
$\calB_1$), $\calK_n$ is an operad equivalent to $\calB_n$, and the map in question
is equivalent to the usual inclusion $\calB_1\to\calB_n$. As it was already mentioned a
cosimplicial space amounts exactly to an infinitesimal bimodule over $\Assoc$ in the category
of spaces, and that for $m=1$ our theorem above is the same as Sinha's formula:
$$
\Ebar_c(\R^1,\R^n)\simeq\underset{\calB_1}{\operatorname{hIbimod}}(\calB_1,
\calB_n)\simeq
\underset{\Assoc}{\operatorname{hIbimod}}(\Assoc, \calK_n)\simeq \Tot \calK_n^\bullet.
$$

Theorem~\ref{t_model_for_long_embeddings} follows from Theoem~\ref{t_model_long2} and Lemma~\ref{l_inf_bimod_equiv}, and also from the fact that the inclusion of operads $\calB_m\hookrightarrow\calB_n$ is equivalent to the inclusion $\F_m\hookrightarrow\F_n$, which means that there is a zigzag of morphisms of operads in which every horizontal arrow is an equivalence:
$$
\xymatrix{
\calB_m\ar@{^{(}->}[d]&{*}\ar[l]_\simeq\ar[d]\ar[r]^\simeq&\F_m\ar@{^{(}->}[d]\\
\calB_n& {*}\ar[l]_\simeq\ar[r]^\simeq&\F_n.
}
$$
In fact the middle map can be chosen to be $W(\calB_m)\hookrightarrow W(\calB_n)$, see~\cite{Salvatore}.

Consider the sequence
$$
C_*[\bullet,S^n]=\{C_*[k,S^n],\, k\geq 0\},
$$
where $C_*[k,S^n]$ is the Fulton-MacPherson-Axelrod-Singer compactification of the configuration space of $k+1$ distinct points in $S^n$ labeled by $\{*,1,2,\ldots,k\}$, one of which, labeled by~$*$, is fixed to be $\infty\in S^n$. Here and below we view $S^n$ as a one-point compactification of $\R^n$. Thus the interior of $C_*[k,S^n]$ is naturally identified with the configuration space $C(k,\R^n)$. It turns out that $C_*[\bullet,S^n]$ is naturally an infinitesimal bimodule over $\F_n$ (and therefore over $\F_m$, $m\leq n$). Let $p_i\colon C_*[k,S^n]\to S^n$, $i=1\ldots k$, be natural projections.  To define a right $\F_n$ action on this sequence we fix a framing of each projection $p_i(X)\in S^n$, $i=1\ldots k$, $X\in C_*[k,S^n]$. In case $p_i(X)\in\R^n$, the framing $\alpha_i\colon\R^n\to T_{p_i(X)}S^n$ is fixed to be the natural identification $\R^n\cong T_{p_i(X)}\R^n\cong T_{p_i(X)}S^n$.   For the points $X$ on the boundary of $C_*[k,S^n]$ the framing is extended by continuity. It is quite easy to see that in case $p_i(X)=\infty$ this framing of $p_i(X)$ depends only on the direction from which $p_i(X)$ approaches $*$. Those framings enable $C_*[\bullet,S^n]$ with a right $\F_n$ action. Indeed, the right action of $\F_n$ replaces the corresponding point in the configuration $X$ by an infinitesimal configuration $c\in\F_n(\bullet)$ inserted accordingly to the framing. The infinitesimal left action produces so called \lq\lq strata at infinity". Let $s$ be the inversion map considered as a coordinate chart at $\infty$:
$$
s\colon\{\infty\}\bigcup\left(\R^n\setminus\{0\}\right)\longrightarrow \R^n,\quad x\mapsto {x}/{|x|^2}.
$$
We define the framing at $*$ to be the natural identification  $\R^n\cong T_0\R^n=T_{s(\infty)}\R^n\cong T_\infty S^n$.
The left action $c\circ_i X$, where $c\in \F_n(\ell)$,   replaces the point $*$ by an infinitesimal  configuration $s_i(c)$, where
$$
s_i\colon\F_n\to\F_n
$$
is the inversion with the center $i$-th point. The replacement is done accordingly to the framing at infinity that we described above.

The reader might have an impression that this construction is more difficult than it actually is. It might appear difficult only because  when points escape to infinity we describe how  the situation looks like from the point of view of $\infty\in S^n$. But if we always  keep the global picture in mind  from the point of view of the observer in $\R^n$ than we can see that there is no any twist in the framing and everything remains as flat as $\R^n$ is.

To be precise there exist two ways to describe $C_*[k,S^n]$ and its
strata. The first one is the usual one, see~\cite{SinhaCompact}, which we
call {\it spherical}, and which was used above to describe the
infinitesimal action of $\F_n$ on $C_*[\bullet,S^n]$. In the second
description, that we call {\it flat}, it is much easier to see that
$C_*[\bullet,S^n]$ is naturally an infinitesimal bimodule over $\F_n$. The
difference is that in the spherical model we look how points approach
$*=\infty$ in $S^n$, while in the flat model we look how points escaping
to infinity are located one with respect to the other in $\R^n$. As
example, consider the situation when points 3, 4, and 5, remain fixed in
$\R^n$, and points 1 and 2 escape to infinity. The corresponding stratum
in $C_*[5,S^n]$ is the product $C(3,\R^n)\times C(3,\R^n)/G$, where $G$ is
the group of translations and positive rescalings. In the flat model, the
first factor describes the location of 3, 4, 5 in $\R^n$; the second
factor describes the relative location of 1, 2, and conglomeration
$x=\{3,4,5\}$. We can represent a point in such stratum as follows:



\begin{center}
\psfrag{Rm}[0][0][1][0]{$\R^n$}
\includegraphics[width=6cm]{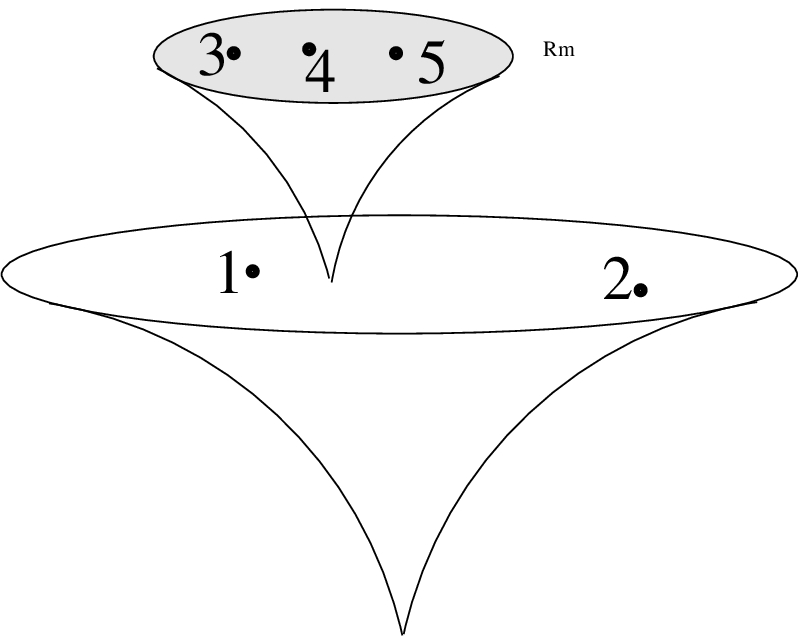}
\end{center}
The upper disc describes the actual world $\R^n$ (it is not quotiented out
by $G$); the lower disc describes how points escape to infinity. In
particular we see that~2 escapes to infinity approximately~5 times faster
than~1. However, from the perspective of the point at infinity the picture
is different:


\begin{center}
\includegraphics[width=3cm]{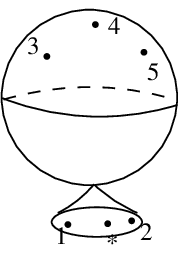}
\end{center}

Since 2 escapes to $\infty$ faster, it is closer to $*$ than~1. The
configuration of points 1, 2, $*$ at infinity is obtained from the \lq\lq flat
configuration" of 1, 2, $x=\{3,4,5\}$, by taking inversion with center $x$.

As a more general example, consider the stratum of $C_*[8,S^n]$ encoded by
the tree:


\begin{center}
\includegraphics[width=8cm]{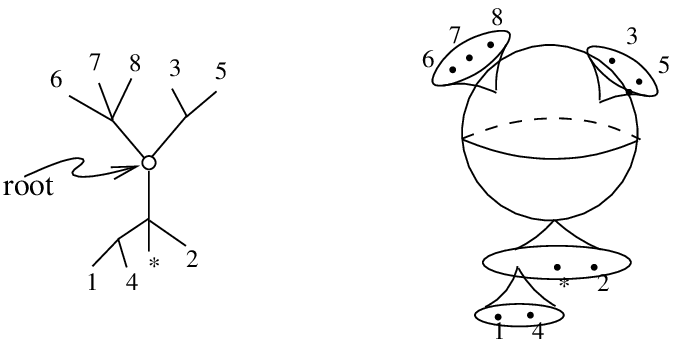}
\end{center}

For the configurations in this stratum, points 6, 7, 8 collide together;
similarly 3 and 5 collide; points 1, 4, and 2 escape to infinity, but
while doing so 1 stays close to 4.


\begin{center}
\psfrag{Rm}[0][0][1][0]{$\R^n$}
\includegraphics[width=10cm]{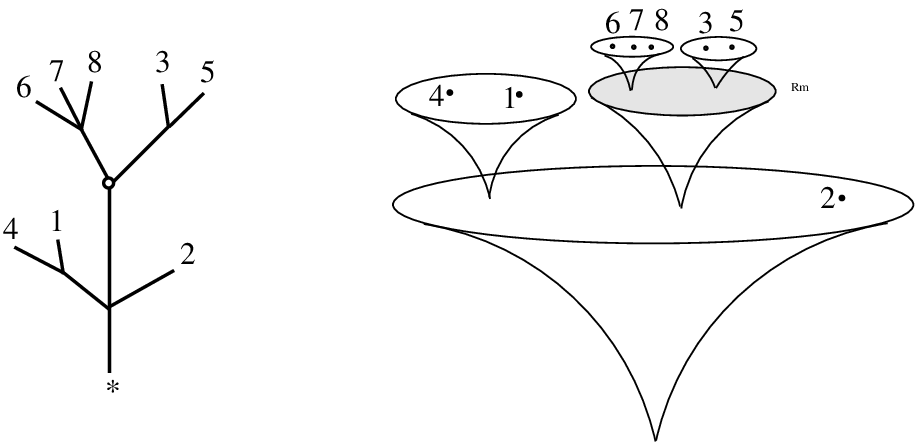}
\end{center}

The figure above describes the corresponding limiting configuration in the
flat model, in which we look how the points are located one with respect
to the other, rather than how they are located with respect to $\infty$.
The shaded disc in the above figure is the actual world $\R^n$. Both flat
and spherical models parametrize their strata as products
$$
C(|r|,\R^n)\times\prod_{v\in V(T)}C(|v|,\R^n)/G,
$$
where $T$ is a tree encoding the stratum, $r$ is its root, $V(T)$ is its
set of non-root-non-leaf vertices, $|v|$ is the valence of $v$ minus~1. To
pass from the spherical parametrization of a stratum encoded by a tree $T$ to the flat one, one needs to take inversion of the factors $C(|v|,\R^n)/G$ that correspond to the vertices $v$ lying on
the path between $*$ and the root $r$ in $T$. For all the other factors
the map, that gives correspondence, is identity.

Mention that the flat description of $C_*[k,S^n]$ is alluded in the
Bott-Taubes integration when one considers \lq\lq strata at infinity", as for example in~\cite{Catt}.

To recall both  $C_*[k,S^n]$ and $\F_n(k)$ are manifolds with corners whose interiors are  respectively $C(k,\R^n)$ and $C(k,\R^n)/G$.

\begin{lemma}\label{l_inf_bimod_equiv}
The projection $C(k,\R^n)\to C(k,\R^n)/G$, where $G$ is the group of translations and positive rescalings, induces a continuous map $C_*[k,S^n]\to \F_n(k)$, $k\geq 0$, which defines an equivalence of infinitesimal $\F_n$ bimodules
$C_*[\bullet,S^n]\to F_n(\bullet)$.
\end{lemma}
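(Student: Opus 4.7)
My plan is to prove the lemma in three stages: construct the map and verify continuity, check compatibility with the infinitesimal bimodule structures, and prove it is a weak equivalence in each arity. The cleanest construction exploits the fact that the group $G$ of translations and positive rescalings of $\R^n$ extends to $S^n$ fixing $\infty$, so it acts on $C_*[k,S^n]$ without moving the distinguished marked point. I would identify the quotient $C_*[k,S^n]/G$ with $\F_n(k)$ stratum-by-stratum using the tree stratification recalled just before the lemma. A stratum of $C_*[k,S^n]$ indexed by a tree $T$ with leaves $\{1,\ldots,k,*\}$ is a product $C(|r|,\R^n)\times\prod_{v\in V(T)}C(|v|,\R^n)/G$ in which only the root factor has not yet been quotiented by $G$; taking the $G$-orbit space of this factor produces $\prod_v C(|v|,\R^n)/G$, which is precisely the parametrization of the $\F_n(k)$-stratum indexed by the tree $T'$ obtained from $T$ by deleting the $*$-leaf and contracting any resulting bivalent vertex. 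The gluing data assembling the strata of $C_*[k,S^n]$ into a manifold with corners are $G$-equivariant, so the stratum matching globalizes to a continuous map, and on interiors it recovers the projection $C(k,\R^n)\to C(k,\R^n)/G$.

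For the bimodule compatibility, the right $\F_n$-action inserts an infinitesimal configuration at a point $p_i(X)\in\R^n$ using the framings described before the lemma; since insertion happens at a finite point, it commutes with translations and rescalings and therefore descends through the $G$-quotient to the usual right operadic composition on $\F_n(k)$. The left $\F_n$-action at the point $*$ is defined through the inversion chart $s$, and the same formula defines the infinitesimal left action of $\F_n$ on itself when we view a stratum of $\F_n$ in the flat model as a configuration glued along a vertex. Under the stratum identification constructed above, the two left actions correspond exactly by construction.

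For the weak equivalence, I would combine two standard inputs. First, for any manifold-with-corners FM-type compactification, the inclusion of the interior is a homotopy equivalence, as the compactification is an iterated real-oriented blow-up along subsets of diagonals. Applied to both sides, this reduces the statement to showing that the interior projection $C(k,\R^n)\to C(k,\R^n)/G$ is a weak equivalence. Second, for $k\geq 2$ the group $G\cong\R^n\rtimes\R_{>0}$ is contractible and acts freely on $C(k,\R^n)$, so this projection is a principal $G$-bundle with contractible fiber and hence a weak equivalence. The cases $k=0,1$ are disposed of by hand: $C_*[0,S^n]$ and $\F_n(0)$ are both points, while $C_*[1,S^n]$ is homeomorphic to a closed $n$-disc and $\F_n(1)$ is a point, so both are contractible.

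The main obstacle I anticipate is the careful bookkeeping in the stratum matching of the first step, specifically for strata of $C_*[k,S^n]$ indexed by trees in which $*$ is not adjacent to the root. In that case the flat-model parametrization involves the inversion chart applied at the vertices lying on the path from $*$ to the root, and one must verify that this interacts correctly with the $G$-quotient so as to produce the standard flat parametrization of the target stratum in $\F_n(k)$. Once that identification is in place, the bimodule and equivalence verifications reduce to direct calculations from the action formulas given just before the lemma.
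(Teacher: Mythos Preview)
Your proposal is sound and in fact goes well beyond what the paper supplies: the paper \emph{skips the proof of this lemma entirely}, remarking only that ``the most difficult part of the proof is probably checking that the induced map is a morphism of infinitesimal bimodules, which is however straightforward from the flat description of $C_*[k,S^n]$.'' Your plan to verify the bimodule compatibility via the flat model is thus exactly the route the paper points to, and your treatment of the weak-equivalence step (interior inclusion is an equivalence for manifold-with-corners compactifications; the $G$-quotient on interiors is a principal bundle with contractible fibre for $k\geq 2$; $k=0,1$ by hand) is the standard and correct argument.

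One small imprecision worth flagging: your phrasing suggests that the $G$-quotient $C_*[k,S^n]/G$ is literally homeomorphic to $\F_n(k)$ via a stratum-by-stratum bijection, but this is not quite true on strata where the $*$-leaf is adjacent to a vertex that becomes bivalent after its removal. For instance when $k=1$, the boundary sphere $S^{n-1}\subset C_*[1,S^n]\cong D^n$ is fixed pointwise by $G$, yet its image in $\F_n(1)=\{*\}$ is a single point. The map you want is still well-defined and continuous --- it simply collapses such strata rather than identifying them homeomorphically --- so this does not affect the validity of your argument, only the wording of the construction. Your anticipated obstacle (the bookkeeping for strata where $*$ is not adjacent to the root and inversions enter the flat parametrization) is the genuine one, and the paper's flat description is precisely the tool that makes it tractable.
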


We skip the proof of this lemma.
The most difficult part of the proof is probably  checking that the induced map is a morphism of infinitesimal bimodules, which is however straightforward from the flat description of $C_*[k,S^n]$.

\begin{theorem}\label{t_model_long2}
For all $k\geq 0$ and all $n\geq m$ one has an equivalence
$$
T_k\Ebarmn\simeq \underset{\F_m}{\operatorname{hIbimod}}{}_{\le k}(C_*[\bullet,S^m],
C_*[\bullet,S^n]).
$$
\end{theorem}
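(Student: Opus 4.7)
My proof plan is to adapt the three-step strategy used for Theorem~\ref{t_fult_mac} to the setting of long embeddings and infinitesimal bimodules. For each $U \in \widetilde\calO(\R^m)$, let $C_*^U[\bullet, S^m] \subset C_*[\bullet, S^m]$ denote the sub-infinitesimal-bimodule over $\F_m$ consisting of configurations whose finite points lie in $U$; this is cofunctorial in $U$. An embedding $f \in \Ebar_c(U, \R^n)$ induces a morphism of infinitesimal bimodules $C_*^U[\bullet, S^m] \to C_*[\bullet, S^n]$ by evaluation (framings being defined as in the proof of Theorem~\ref{t_fult_mac}, and the datum at $*$ being fixed by the long-embedding condition), yielding a natural evaluation
$$
\mathrm{ev}\colon \Ebar_c(U, \R^n) \longrightarrow \underset{\F_m}{\hIbimod}{}_{\leq k}\!\left(C_*^U[\bullet, S^m], C_*[\bullet, S^n]\right).
$$

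First, I would construct a functorial cofibrant replacement $\widetilde{C}_*^U[\bullet, S^m]$ of $C_*^U[\bullet, S^m]$ in truncated infinitesimal $\F_m$-bimodules, following Proposition~\ref{p_cofibr}. Since the left-action composition $\circ_s \colon \F_m(i)\otimes M(j) \to M(i+j-1)$ requires $1 \leq s \leq i$, the component $\F_m(0)$ contributes only to the right action, so hairs need only be attached at finite configuration points exactly as in Section~\ref{s2}; the resulting double filtration by geometrically distinct finite points and by hairs of intermediate length produces a cellular pushout decomposition analogous to~\eqref{eq_pushout2}.

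Next, I would establish the infinitesimal-bimodule analog of Proposition~\ref{p_hom_sheaf}: the cofunctor
$$
U \longmapsto \underset{\F_m}{\hIbimod}{}_{\leq k}\!\left(\widetilde{C}_*^U[\bullet, S^m], \calN(\bullet)\right), \qquad U \in \widetilde\calO(\R^m),
$$
is polynomial of degree $\leq k$ for any infinitesimal $\F_m$-bimodule $\calN$. The partition-of-unity argument of Section~\ref{s3} transfers verbatim, since the left $\F_m$-action fixes $*$ and hence preserves the map $\widetilde g$ sending a hairy configuration to its finite set of geometrically distinct points; the maps $\Xi$, $\vec\phi$, $\vec\psi$ are defined exactly as before.

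Finally, I would verify the analog of Proposition~\ref{p_eval}: for $U \in \widetilde\calO_{\leq k}(\R^m)$ a disjoint union of $\ell \leq k$ balls together with one complement of a closed ball, $\mathrm{ev}$ is a weak equivalence. After choosing framed basepoints $L^{fr}$ in each ball component, $\widetilde{C}_*^U[\bullet, S^m]$ becomes equivalent to the sub-infinitesimal-bimodule freely generated by the $\Sigma_\ell$-orbit of $L^{fr}$ together with $* \in C_*[0,S^m]$ coming from the unbounded component; mapping out of this reduces to the compactified based configuration space $C_*[\ell, S^n]$, which by the standard argument of~\cite{GKW} is homotopy equivalent to $\Ebar_c(U, \R^n)$. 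The three steps combine via the universal property of $T_k$ and yield the theorem upon evaluation at $U = \R^m$. The main obstacle lies in this last step, specifically in checking that the infinitesimal left $\F_m$-action on $C_*[\bullet, S^n]$ — described in the flat model as inversion at the point at infinity — matches the asymptotic behavior of long embeddings, so that the single generator at $*$ encodes exactly the boundary-at-infinity condition and hence the unique long-embedding component at infinity is correctly recorded by the infinitesimal bimodule structure.
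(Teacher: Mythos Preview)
Your proposal is correct and follows essentially the same approach as the paper: the paper deduces the theorem from a cofibrant replacement statement (Proposition~\ref{p_infmod_cof}) together with Theorem~\ref{t_model_long3}, whose proof sketch is precisely ``for any $U\in\widetilde\calO(\R^m)$ one can similarly define infinitesimal bimodules $\widetilde C_*[\bullet,U]$ and then prove statements similar to Proposition~\ref{p_hom_sheaf} and Proposition~\ref{p_eval}.'' Your three steps are exactly these, and your observation that $\F_m(0)$ acts only on the right (so the hairy replacement is built as in Section~\ref{s2}, with the extra relation that hairs vanish at $\infty$) matches the paper's construction.
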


This theorem is a consequence of Proposition~\ref{p_infmod_cof} and Theorem~\ref{t_model_long3} below.

Notice that we are in a similar situation as in Section~\ref{s2}: one can see that $C_*[\bullet,S^m]$ is cofibrant as an infinitesimal bimodule over the reduced Fulton-MacPherson operad $\bar\F_m$. Thus one only needs to correct it a little bit in order to make the (right) action of $\F_m(0)$ to be free. Define $\widetilde C_*[\bullet,S^m]$ as the sequence of spaces
$$
\widetilde C_*[k,S^m]=\left.\left(\coprod_{\ell\geq 0}\widetilde C_*[k,S^m]\times\left.\left((S^m)^{\times \ell}\times [0,1]^\ell\right)\right/\Sigma_\ell\right)\right/\sim
$$
of hairy configurations. The equivalence relations are~\eqref{eq_hairs_sigma}, \eqref{eq_hairs_0}, \eqref{eq_hairs_collide}, plus in addition~\eqref{eq_hairs_collide} must hold whenever $y_i=\infty$. In words the last condition says that a hair must disappear whenever it approaches infinity.

Define filtration
$$
\widetilde C_*^{(0)}[\bullet,S^m]\subset \widetilde C_*^{(1)}[\bullet,S^m]\subset \widetilde C_*^{(2)}[\bullet,S^m]\subset\ldots
$$
similar to~\eqref{eq_filtration2} by the number of geometrically distinct points different from~$\infty$.

\begin{proposition}\label{p_infmod_cof}
The natural projection
$$
\widetilde C_*^{(k)}[\bullet\bigr|_{\leq k},S^m]\to C_*[\bullet\bigr|_{\leq k}, S^m]
$$
is a cofibrant replacement of $k$-truncated infinitesimal bimodules over $\F_m$.
\end{proposition}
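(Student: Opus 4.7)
The plan is to mirror the argument of Proposition~\ref{p_cofibr} step by step, with adjustments for the infinitesimal bimodule structure and for the distinguished point $*=\infty$.

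First I would verify that the projection $\widetilde C_*^{(k)}[\bullet\bigr|_{\leq k}, S^m] \to C_*[\bullet\bigr|_{\leq k}, S^m]$ is a componentwise $\Sigma_\bullet$-equivariant weak equivalence. The obvious homotopy inverse is the inclusion of hairless configurations, and the retraction is given by the straight-line contraction of all hair lengths $t_j$ to $0$. Both maps commute with the left action of $\F_m$ at $*$ and with the right $\F_m$ action at the finite points of the configuration, so the projection is an equivalence of $k$-truncated infinitesimal $\F_m$-bimodules.

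Next I would show that $\widetilde C_*[\bullet, S^m]$, viewed as an infinitesimal $\F_m$-bimodule in sets, is freely generated by the symmetric sequence
$$
\widetilde C_*(\ell, S^m) \;=\; \coprod_{i\geq 0} \bigl( C(\ell, \R^m) \times (0,1)^i \bigr)\bigr/\Sigma_i,
$$
in exact analogy with~\eqref{eq_gen_strat}. The picture is that an arbitrary hairy configuration in $\widetilde C_*[k, S^m]$ is uniquely reconstructed from an interior configuration in $\R^m$ (the generator), together with a tree of right $\F_m$-insertions at the finite points recording collisions, a single infinitesimal left $\F_m$-insertion at $*$ recording points that have escaped to $\infty$, and finally a nullary right insertion (interpreted as a hair of length $1$) for each forgotten point. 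Hairs with length strictly in $(0,1)$ then supply the free coordinates witnessing this rigid decomposition, and the requirement that hairs die when they approach $\infty$ is precisely what prevents the nullary right action and the left action at $*$ from interfering. With this freeness in hand, I would introduce, as in Proposition~\ref{p_cofibr}, a primary filtration $\widetilde C_*^{(\ell)}[\bullet, S^m]$ by the number of geometrically distinct points other than $\infty$, and inside each layer a secondary filtration $\widetilde C_*^{(\ell), i}[\bullet, S^m]$ by the number of hairs of length strictly between $0$ and $1$. Each step of the secondary filtration sits in a pushout square of infinitesimal $\F_m$-bimodules
$$
\xymatrix{
Free^{\mathrm{Ibimod}}_{\F_m}\!\bigl(\partial(C_*[\ell+i, S^m]\times [0,1]^i/\Sigma_i);\ell\bigr) \ar[r]\ar[d] & Free^{\mathrm{Ibimod}}_{\F_m}\!\bigl(C_*[\ell+i, S^m]\times [0,1]^i/\Sigma_i;\ell\bigr) \ar[d]\\
\widetilde C_*^{(\ell+i-1), i-1}[\bullet, S^m] \ar[r]^(.72){\Bigr\lrcorner} & \widetilde C_*^{(\ell+i-1), i}[\bullet, S^m],
}
$$
exhibiting the lower horizontal map as a cofibration. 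The truncated statement then follows because all of these generating strata contribute only to components of degree $\leq k$ once the primary filtration index is bounded by $k$.

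The step I expect to be the main obstacle is the second one: rigorously verifying that the infinitesimal $\F_m$-bimodule $\widetilde C_*[\bullet, S^m]$ really is free, in sets, on the prescribed symmetric sequence. Infinitesimal bimodules carry both left and right actions, and at the special point $*$ the left action generates the strata at infinity while the nullary right action forgets a point; the whole purpose of introducing hairs in $S^m$ and of forbidding hairs at $\infty$ is to keep these two generation mechanisms non-interfering. Carrying this out carefully essentially amounts to combining the flat description of $C_*[k, S^m]$ with a tree-combinatorial bookkeeping that separates insertions performed at $*$ from those performed at finite points; once this is done, the rest of the argument goes through exactly as in the right module case.
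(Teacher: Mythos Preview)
Your approach is exactly what the paper does: it simply says ``The proof is similar to~\ref{p_cofibr},'' and your write-up is a faithful expansion of that, correctly noting the extra bookkeeping for the left action at $*$ and the vanishing of hairs at~$\infty$. One small slip: in your displayed formula for the generating strata you wrote $C(\ell,\R^m)$ where it should be $C(\ell+i,\R^m)$ (the $i$ hair basepoints are part of the configuration), as your own pushout square with $C_*[\ell+i,S^m]$ already reflects.
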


The proof is similar to~\ref{p_cofibr}.

\begin{theorem}\label{t_model_long3}
For $n\geq m$ and any $k\geq 0$ one has
$$
 T_k\Ebarmn\simeq \underset{\F_m}{\operatorname{Ibimod}}{}_{\le k}(\widetilde C_*^{(k)}[\bullet,S^m],
C_*[\bullet,S^n]).
$$
\end{theorem}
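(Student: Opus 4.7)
The approach parallels Sections~\ref{s2}--\ref{s4}, replacing right modules over $\F_m^{fr}$ by infinitesimal bimodules over $\F_m$ and tracking the fixed point at infinity that distinguishes long embeddings from plain ones. First I set up a functorial version in $U$: for $U\in\widetilde\calO(\R^m)$ let $U^+=U\cup\{\infty\}\subset S^m$, and let $\widetilde C_*^{(k)}[\bullet,U^+]$ be the pointed hairy configuration infinitesimal bimodule in which the distinguished framed point $*=\infty$ is always present and hairs at $\infty$ are forbidden. Write
$$
R_k(U):=\underset{\F_m}{\operatorname{Ibimod}}{}_{\leq k}\bigl(\widetilde C_*^{(k)}[\bullet,U^+],\,C_*[\bullet,S^n]\bigr).
$$
There is a natural evaluation cofunctor $\Ebar_c(-,\R^n)\to R_k(-)$ on $\widetilde\calO(\R^m)$, defined by sending a long embedding $f$ to the bimodule map induced by evaluation, with $*\mapsto\infty$ carrying the prescribed linear germ of $i\colon\R^m\hookrightarrow\R^n$.

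Second, I would show that $R_k(-)$ is a homotopy $\J_k$-sheaf on $\widetilde\calO(\R^m)$. The argument copies Section~\ref{s3}: for a cover $\{U_i\}_{i\in I}$, build a splitting of the canonical map using the hair-trimming endomorphism $\Xi$ and a partition of unity $\vec\psi$ on $Sub_{\leq k}(U)$ subordinate to $\{Sub_{\leq k}(U_i)\}$, setting $(sG)(\widetilde X)=G(\vec\phi(\widetilde X),\Xi(\widetilde X))$ with $\vec\phi$ defined exactly as before. The point $\infty$ is excluded from the cover data (since $\infty\notin U_i$) and contributes trivially to the set of geometrically distinct points; both left and right actions of $\F_m$ preserve this set, so $sG$ is an infinitesimal bimodule morphism, and the homotopies from Section~\ref{s3} carry over unchanged.

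Third, I would compute $R_k(U)$ for the basic opens $U\in\widetilde\calO_{\leq k}(\R^m)$. Choose a framed representative in each ball component and combine with $*=\infty$ to form an $(\ell+1)$-point framed configuration $L^{fr}$, $\ell\leq k$. As in Proposition~\ref{p_eval}, the sub-infinitesimal-bimodule generated by $L^{fr}$ is free of the expected rank, and the inclusion into $\widetilde C_*^{(k)}[\bullet,U^+]$ is a $\Sigma$-equivariant homotopy equivalence — the inverse contracts each ball onto its representative point and the complement-of-closed-ball component onto $\infty$. Evaluating on $L^{fr}$ identifies $R_k(U)$ with the stratum of $C_*[\ell,S^n]$ consisting of $\ell$ ordered framed points in $\R^n$ together with the fixed linear germ at~$\infty$, which unpacks to $\Ebar_c(U,\R^n)$. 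Since Proposition~\ref{p_infmod_cof} says $\widetilde C_*^{(k)}[\bullet,U^+]$ is cofibrant, $R_k$ is already the derived mapping space; combining this with the universal property of polynomial approximations of Weiss (as applied in Section~\ref{s5}) yields $R_k(U)\simeq T_k\Ebar_c(U,\R^n)$, which at $U=\R^m$ is the theorem.

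The main obstacle is the identification in the third step: one must carefully unpack the infinitesimal left action of $\F_m$ at $*$ via the flat/spherical models of Lemma~\ref{l_inf_bimod_equiv} to verify that a bimodule morphism on $L^{fr}$ records exactly the data of a long embedding. Concretely, the left-action data at $*$ encodes the germ at infinity of the map, and compatibility with the right action enforces that this germ matches the linear behaviour of $i$ — producing the homotopy fibre over $\Imm_c$ rather than $\Emb_c$ itself. One should also confirm that the assignment $U\mapsto \widetilde C_*^{(k)}[\bullet,U^+]$ is genuinely functorial on the category $\widetilde\calO(\R^m)$ of open sets with compact complement, so that the sheaf argument of Section~\ref{s3} applies without modification.
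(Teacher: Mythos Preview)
Your proposal is correct and follows essentially the same approach as the paper: the paper's proof is a one-sentence sketch stating that one defines $\widetilde C_*[\bullet,U]$ for $U\in\widetilde\calO(\R^m)$ and then proves analogues of Propositions~\ref{p_hom_sheaf} and~\ref{p_eval}, which is precisely the three-step program you outline. Your additional discussion of the left action at~$*$ encoding the germ at infinity, and of the functoriality of $U\mapsto\widetilde C_*^{(k)}[\bullet,U^+]$, correctly identifies the places where care is needed beyond a mechanical transcription of Sections~\ref{s3}--\ref{s4}.
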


 In the above $\operatorname{Ibimod}{}_{\leq k}$ denote the space of (non-derived) morphisms of truncated infinitesimal bimodules. The main idea  is that for any $U\in\widetilde\calO(\R^m)$ one can similarly define infinitesimal bimodules
$\widetilde C_*[\bullet,U]$ and then one can prove statements similar to  Proposition~\ref{p_hom_sheaf} and Proposition~\ref{p_eval} which imply the result.

\subsection*{Acknowledgement}
The author is greatful to G.~Arone, P.~Boavida de Brito, B.~Fresse, J.~Hughes, M.~Kontsevich, P.~Lambrechts, P.~Salvatore, and B.~Vallette for discussions and communication.

\end{document}